\newtheorem{lemma}{Lemma}
\newtheorem{prop}{Proposition}
\newtheorem{theorem}{Theorem}
\newtheorem{remark}[theorem]{Remark}
\newtheorem{definition}{Definition}
\numberwithin{equation}{section}
\journal{}
\begin{document}
\nocite{*}
\begin{frontmatter}
\title{Solution space characterisation of perturbed linear functional and integrodifferential Volterra convolution equations: Cesàro limits  }

\author{John A. D. Appleby$^a$}
\ead{john.appeby@dcu.ie}
\cortext[cor1]{Corresponding author}

\author{Emmet Lawless$^a$\corref{cor1}}
\ead{emmet.lawless6@mail.dcu.ie}

\affiliation{School of Mathematical Sciences, Dublin City University.}
\address{DCU Glasnevin Campus, Collins Avenue, Dublin 9, Ireland.}
\journal{arxiv}

\begin{abstract}
In this article we discuss the requirements needed in order to characterise the solution space of perturbed linear integro-differential Volterra convolution equations. We highlight in general how the pointwise behaviour of perturbation functions does not necessarily propagate through to the solution which the classical literature seems to suggest. To illustrate this general idea we show the Cesàro mean of the solution can converge even in cases when the Cesàro mean of the perturbation function diverges. Furthermore we provide a characterisation of when such convergence takes place and explicitly identify the limit in terms of the problem data. Additionally we prove how all results can also be applied to perturbed linear functional differential equations.
\end{abstract}
\end{frontmatter}

\section{Introduction}
Analysis of the asymptotic behaviour of both deterministic and stochastic Volterra type integro-differential equations has attracted a lot of attention in recent decades. For deterministic linear equations of non-convolution type, recent works such as \cite{Boykov2024} investigate stability of the zero solution of systems of perturbed Volterra integro-differential equations while for stochastic equations asymptotic mean square stability is addressed in \cite{AP:2021}. Consider the scalar equation,
\begin{equation} \label{eq. x dynamics}
    \dot{x}(t) = \int_{[0,t]}\nu(ds)x(t-s)+f(t), \quad t \geq 0, \\
\end{equation}
where $\nu$ is a finite signed Borel measure and $x(0)=\xi \in \mathbb{R}$ is a given initial condition. Such convolution equations are in general well understood, to wit, many monographs both classic and modern (such as \cite{BrunnerVIE,cor:1973non,Cor90b,GLS}) have been produced providing an in dept study to which the reader is referred for more details. The underlying resolvent is crucial in studying \eqref{eq. x dynamics}, it's dynamics are given by,
\begin{equation} \label{eq. r dynamcis}
    \dot{r}(t)=\int_{[0,t]}\nu(ds)r(t-s), \quad t > 0; \quad r(0)=1.
\end{equation}

As shown in \cite[Theorem 3.9]{GLS} it is necessary to assume $r \in L^1(\mathbb{R}_+;\mathbb{R})$ in order to obtain strong admissibility results, thus in this article we also employ such an assumption\footnote{Recall this is equivalent to $z-\hat{\nu}(z) \neq 0$ for $\text{Re}(z)\geq0$ where $z \in \mathbb{C}$ and $\hat{\nu}$ denotes the Laplace transform of the measure $\nu$ \cite[Theorem 3.3.5]{GLS}.}. If we consider the unperturbed version of \eqref{eq. x dynamics}, i.e with $f=0$, the solution is simply $x(t)=r(t)\xi$, hence the assumption that $r \in L^1(\mathbb{R}_+;\mathbb{R})$ endows the solution with many desirable properties. For example solutions will vanish at infinity, be bounded, $p-$integrable ($p\geq 1$), absolutely continuous etc., a larger list of properties the solution will possess is collected in Gripenberg, Londen and Staffons \cite[Theorem 3.9]{GLS}. It is therefore very natural to ask, can we characterise when such properties are \emph{preserved} after a perturbation is introduced, or in other words can we give necessary and sufficient conditions on the forcing function $f$ to ensure the solution of equation \eqref{eq. x dynamics} is an element of a particular function space $V$. Surprisingly the classical theory does not provide an answer to this question, but a solution was proposed by the authors in \cite{AL:2023(AppliedMathLetters)}. For a given function space $V$ the key for obtaining such a result is a decomposition lemma\footnote{We also require the operation of convolution with $L^1$ functions to map $V$ into itself and for $r \in V$, but these conditions are known from the classical theory.} which states (for some locally integrable function $f$) if $\int_\cdot^{\cdot+\theta}f(s)ds \in V$ for each $\theta \in (0,1]$ then $f=f_1+f_2$ where $f_1 \in V$ and $\int_0^\cdot f_2(s)ds \in V$. Such a result was proven for $V=BC_0(\mathbb{R}_+;\mathbb{R})$ (space of bounded continuous functions that vanish at infinity) in \cite[Lemma 15.9.2]{GLS} and by the authors for $V=L^p(\mathbb{R}_+;\mathbb{R})$ for $p\geq1$ in \cite[Lemma 2]{AL:2023(AppliedMathLetters)}. In this article this takes the form of Lemma \ref{lem. decomposition lemma part 2} which deals with the case that $V=\textup{Ces}(\mathbb{R}_+;\mathbb{R})$ (see Definition \ref{def. space of fucntion with Cesàro limits}).



It is the authors opinion that the results provided in \cite{AL:2023(AppliedMathLetters)} (and also in this work) are by no means restricted to the particular spaces listed above and  that the framework developed can be extended to a wide variety of function spaces including, but not restricted to, those spaces found in \cite[Theorem 3.9]{GLS}. It is in this regard that this article should be understood as further evidence that the methods developed in \cite{AL:2023(AppliedMathLetters)} can be used to characterise non-standard behaviour in the solution of equation \eqref{eq. x dynamics}. 

Developing a general approach to characterise the behaviour of solutions to \eqref{eq. x dynamics} is of interest as such results are fundamental when considering more complicated stochastic extensions such as
\begin{equation} \label{eq. Stochastic X}
    dX(t)=\left(f(t)+\int_{[0,t]}X(t-s)\nu(ds)\right)dt+\sigma(t)dB(t),
\end{equation}
where $B$ is a one dimensional Brownian motion and $\sigma \in C(\mathbb{R}_+;\mathbb{R})$. For background material on stochastic differential equations see \cite{GikSkor:1972}. Equations such as \eqref{eq. Stochastic X}, as well as their finite memory counterpart\footnote{When $\nu$ is supported on a compact interval $[0,\tau]$ for some $\tau >0$.} have been subjected to intense study \cite{AP:2002(ECP),AP:2004(SubExpItoVol),AF:2003(EJP),ApRie:2006(SAA),ChanWilliams:1989}.  Using the framework developed in \cite{AL:2023(AppliedMathLetters)}, it is possible to give sharp characterisations of the behaviour of sample paths in an \textit{almost sure} sense. This is demonstrated by the authors in \cite{AL:2024(SVE_Lp)} in which results from \cite{AL:2023(AppliedMathLetters)} are leveraged to characterise when solutions of \eqref{eq. Stochastic X} are elements of $L^p(\mathbb{R}_+;\mathbb{R})$ for $p\geq 1$ (pathwise) \textit{almost surely} and when the sample paths vanish at infinity \textit{almost surely}. It is expected that results from this article can also be extended in the same manner. The main use of the deterministic results lies in considering the process $Z(t)=X(t)-Y(t)$ where $Y(t)$ is the solution to an SDE, namely: $dY(t)=(f(t)-Y(t))dt+\sigma(t)dB(t)$. With this observation one can show that $Z$ is continuously differentiable and for each fixed path satisfies an equation of type \eqref{eq. x dynamics} \cite[Lemma 3]{AL:2024(SVE_Lp)}. Thus the study of equation \eqref{eq. Stochastic X} reduces to the study of a Markovian SDE and the deterministic equation \eqref{eq. x dynamics}.

The key point outlined in \cite{AL:2023(AppliedMathLetters)} and throughout this article is that if one wishes to obtain such characterisation results for solutions to \eqref{eq. x dynamics}, it is not the pointwise behaviour of the function $f$ that matters but rather it's integral over compact intervals:
\begin{equation} \label{eq. interval average}
   \int_t^{t+\theta}f(s)ds \quad \text{ for }\theta \in (0,1].
\end{equation}
To the best of our knowledge consideration of this quantity in the context of the asymptotic behaviour of perturbed dynamical systems, first appeared in papers of Strauss and Yorke \cite{SY67a,SY67b}. In many situations it is the case that the forcing function may not be well behaved in a pointwise sense but the smoothing effect of integration drastically improves the situation. For instance in \cite{AL:2023(AppliedNumMath)}, the authors provide an example of a continuous function, in which $\limsup_{t \to \infty}f(t)=+\infty$, and yet,
\[
 \lim_{t \to \infty}\int_t^{t+\theta}f(s)ds=0 \text{ for each } \theta \in (0,1].
\]
It is shown in \cite{AL:2023(AppliedMathLetters)} that given such a perturbation function $f$, solutions of \eqref{eq. x dynamics} obey $x(t) \to 0$ as $ t \to \infty$, despite the fact that the perturbation function has a diverging $\limsup$. The theory for Volterra integro-differential equations is intricately linked to the theory of Volterra integral equations, however we make the observation that this phenomena cannot occur with integral equations. By integral equation we refer to the following:
\begin{equation} \label{eq. x dynamics integral eq}
    x(t) = \int_0^tk(t-s)x(s)ds+f(t), \quad t \geq 0. \\
\end{equation}
We highlight that the classical theory for \eqref{eq. x integral eq} is in fact sharp and in this case it is indeed the pointwise behaviour of the perturbation term that governs rather then the interval average \eqref{eq. interval average}. Thus if one considered the integral equation \eqref{eq. x dynamics integral eq} with $k \in L^1(\mathbb{R}_+;\mathbb{R})$ and a perturbation function $f$ as described above, the solution of \eqref{eq. x dynamics integral eq} would have a diverging $\limsup$ while the solution of the integro-differential equation (with the same kernel $k$) would converge to zero. We also remark that this ``\textit{robustness}'' of integro-differential equations is also a phenomena found in so called functional differential equations wherein the system has only a finite memory.

We illustrate this phenomena once again by selecting a non standard, yet interesting type of convergence. We are concerned with when solutions of \eqref{eq. x dynamics} and \eqref{eq. x dynamics integral eq} admit a Cesàro type limit. Assuming $r \in L^1(\mathbb{R}_+,\mathbb{R})$, we show
\begin{equation} \label{eq. cesaro limit of interval average}
        \frac{1}{t}\int_0^t \int_{s}^{s+\theta}f(u)du\,ds  \longrightarrow C \theta \text{ as } t \to \infty \quad \text{ for each }\theta \in (0,1],
\end{equation}
is equivalent to
\begin{equation} \label{eq. cesaro limit of x}
    \frac{1}{t}\int_0^t x(s)ds \longrightarrow \frac{-C}{\nu(\mathbb{R}_+)} \in \mathbb{R} \,\text{ as }\, t \to \infty,
\end{equation}
where $C \in \mathbb{R}$ and $x$ is the solution of \eqref{eq. x dynamics}. Additionally we show that if you alter condition \eqref{eq. cesaro limit of interval average} by imposing that $f$ itself must posses a Cesàro limit then this imposes necessary restrictions on the Cesàro limit of the derivative of $x$. We show,
\begin{equation} \label{eq. cesaro limit of perturbation function}
        \frac{1}{t}\int_0^t f(s)ds \longrightarrow C \in \mathbb{R} \text{ as } t \to \infty
\end{equation}
is equivalent to
\begin{equation}
    \lim_{t \to \infty} \frac{1}{t}\int_0^t x(s)ds = \frac{-C}{\nu(\mathbb{R}_+)}; \quad \lim_{t\to \infty} \frac{1}{t}\int_0^t \dot{x}(s)ds = 0.
\end{equation}
In the latter we note the condition on the Cesàro limit of the derivative cannot be dropped. This highlights the usual situation that conditions imposed directly on the forcing function $f$ are in general \textit{sufficient} to obtain information about the solution, but not \textit{necessary}. However this is once again not the case for solutions to the integral equation \eqref{eq. x dynamics integral eq}, in fact condition \eqref{eq. cesaro limit of perturbation function} is  equivalent to solutions of \eqref{eq. x dynamics integral eq} having a Cesàro limit which in this case is equal to $C(1-1/\nu(\mathbb{R}_+))$. To the best of our knowledge there has been no analysis of the time average of solutions to \eqref{eq. x dynamics} however we mention analogous equations in discrete time have received some attention \cite{AP:2016(EJQTDE),AP:2017}. Despite this apparent lack of literature there has been much work in the context of admissibility theory for so called Marcinkiewicz spaces which are intimately linked to Cesàro limits of functions. For a brief modern introduction to such spaces we recommend the book by Corduneanu \cite[pp.41-48]{Cor:2009} or for a more comprehensive study \cite{Bertrandias:1966}, while for classical admissibility results regarding integral equations we mention \cite{Benes:1965(JMPh)}.

\section{Mathematical Preliminaries}
Let $p\in [1,\infty)$ and $f:\mathbb{R}_+\to\mathbb{R}$. We say $f \in L^p(\mathbb{R}_+;\mathbb{R})$ if $\int_{\mathbb{R}_+}|f(s)|^p ds<+\infty$ and $f \in L^p_{loc}(\mathbb{R}_+;\mathbb{R})$ if $\int_{K}|f(s)|^pds <+\infty$, where $K\subset \mathbb{R}_+$ is compact and both integrals are understood in the Lebesgue sense. If $M(\mathbb{R}_+;\mathbb{R})$ is the space of finite signed Borel measures on $\mathbb{R}_+$, and $\nu\in M(\mathbb{R}_+;\mathbb{R})$, we consider the halfline Volterra equation given by
\begin{equation} \label{eq. x}
    \dot{x}(t)  = \int_{[0,t]}x(t-s) \nu(ds)+f(t), \quad t \geq 0; \quad    x(0)  = \xi \in \mathbb{R}.
\end{equation}
We say $x$ is a solution of \eqref{eq. x} on an interval $(0,T]$ whenever $x$ is locally absolutely continuous, satisfies the initial condition and obeys the dynamics  in \eqref{eq. x} for almost all $t \in (0,T]$. We stipulate throughout, and \emph{without further reference}, that $f \in L^1_{loc}(\mathbb{R}_+;\mathbb{R})$, which guarantees a solution $x$ exists (see Gripenberg et al.\cite[Theorem  3.3.3]{GLS}). In particular, the solution satisfies a variation of constants formula
\begin{equation} \label{eq. VOC x}
    x(t)=r(t)\xi+\int_0^tr(t-s)f(s)ds, \quad t \geq 0,
\end{equation}
where $r$ is the so--called differential resolvent of $\nu$, which is the unique absolutely continuous function from $\mathbb{R}_+$ to $\mathbb{R}$, satisfying
\begin{equation} \label{eq. r}
  \dot{r}(t)=\int_{[0,t]}\nu(ds)r(t-s), \quad t>0; \quad r(0)=1. 
\end{equation}
If $\mu$ is a finite measure on $[0,\infty)$, and $f:[0,\infty)\to\mathbb{R}$ then their convolution is 
\[
(f\ast \mu)(t):=\int_{[0,t]} f(t-s)\mu(ds), \quad t\geq 0.
\]
The convolution of two functions is defined analogously. Given a measure $\mu \in M(\mathbb{R}_+;\mathbb{R})$, we denote $|\mu|$ as the set function which takes each Borel set $E \subseteq \mathbb{R}_+$ and assigns the total variation of $\mu$ on $E$. Recall the total variation of a measure $\mu$ on a set $E$ is given by $|\mu|(E)\coloneqq\sup_{\pi}\sum_{j=1}^n|\mu(E_j)|$
where $\pi$ denotes the collection of finite partitions of the set $E$. For more details on these definitions see \cite[Section 3.5]{GLS}. With a slight abuse of terminology we shall call $|\mu|$ the total variation measure of $\mu$. The following standard estimate \cite[Theorem 3.5.6]{GLS} will be heavily utilised:
\[
\left| (f\ast \mu)(t)\right| \leq \int_{[0,t]} |f(t-s)||\mu|(ds).
\]
The reader should keep in mind that all results presented for solutions of equation \eqref{eq. x} can be emulated for the finite memory problem, which is a type of functional differential equation. In this instance we would fix a constant $\tau >0$ and consider
\begin{align} \label{eq. Functional x }
\dot{x}(t)  =\int_{[-\tau,0]}x(t+u)\mu(du)+f(t), \quad t\geq 0;\quad x(t) = \psi(t), \quad t\leq0
\end{align}
where $\mu \in M([-\tau,0];\mathbb{R})$ and $\psi \in C([-\tau,0];\mathbb{R})$. With the following convention one can essentially use identical proofs for results regarding equations \eqref{eq. x} and \eqref{eq. Functional x }. For any subset of the real line, write $-E:=\{x\in\mathbb{R}:-x\in E\}$. If $\mu \in M([-\tau,0];\mathbb{R})$, we can construct a $\tilde{\mu}\in M([0,\infty);\mathbb{R})$ by writing 
\begin{align} \label{eq. tildemu}
&\tilde{\mu}(E)=\mu(-E), \quad  \text{for any Borel set $E\subseteq [0,\tau]$} \nonumber, \\ &\tilde{\mu}(E)=0, \quad \text{for any Borel set $E$ with $E\cap[0,\tau] =\emptyset$}. 
\end{align}
Let $g:\mathbb{R}\to\mathbb{R}$ be such that $g(t)=0$ for all $t<0$. With this construction we have
\[
\int_{[-\tau,0]} g(t+s)\mu(ds)=\int_{[0,t]}g(t-s)\tilde{\mu}(ds)  =(g \ast \tilde{\mu} )(t), \quad t\geq 0. 
\]
With this notation the resolvent of equation \eqref{eq. Functional x } obeys,
\begin{equation} \label{eq. functional resolvent}
  \dot{r}_{\tau}(t)=\int_{[0,t]}\tilde{\mu}(ds)r_{\tau}(t-s), \quad t>0; \quad r_{\tau}(0)=1; \quad r_{\tau}(t)=0 \quad t<0. 
\end{equation}
Here we use the notation $r$ and $r_\tau$ to distinguish between the resolvent of the Volterra equation \eqref{eq. x} and the functional equation \eqref{eq. Functional x } respectively.\\

If we define $F(t) \coloneqq \int_{[-\tau,0]}\left(\int_s^0r_{\tau}(t+s-u)\psi(u)du\right)\mu(ds)$, the solution of \eqref{eq. Functional x } has representation
\begin{equation} \label{eq. VOC functional x}
    x(t,\psi)=r_{\tau}(t)\psi(0) + F(t)+\int_0^t r_{\tau}(t-s)f(s)ds, 
\end{equation}
for $t \geq0$. The assumption $r_\tau \in L^1(\mathbb{R}_+;\mathbb{R})$ ensures that $F= O(e^{-\alpha t})$ for some $\alpha >0$, this follows from Theorem I.5.4 and Corollary I.5.5 in \cite{Diekmann} paired with Lemma 2.1 in \cite{AMR}. Thus this extra term which does not appear in the solution for the Volterra equation \eqref{eq. VOC x} will not have any contribution towards the Cesàro mean. As we will be primarily concerned with Cesàro limits of functions we provide a precise definition.
\begin{definition} \label{def. space of fucntion with Cesàro limits}
We define the space of Cesàro functions as follows:
\begin{equation} \label{eq. space of fucntion with Cesàro limits}
    \text{Ces}(\mathbb{R}_+;\mathbb{R}) \coloneqq \left\{f \in L^1_{loc}(\mathbb{R}_+;\mathbb{R}) : \lim_{t \to \infty} \frac{1}{t}\int_0^t f(s)ds = C  \text{ for some } C \in \mathbb{R} \right\}.
\end{equation}
Additionally we say $f$ admits a Cesàro limit if $f \in \text{Ces}(\mathbb{R}_+;\mathbb{R})$.
\end{definition}
\begin{remark} \label{rem. L1 functions have a trvial Cesàro limit}
Clearly all integrable functions admit a trivial Cesàro limit. It is in this regard that the extra term ($F$) appearing in \eqref{eq. VOC functional x} does not provide any additional difficulties. To see this consider $y(t)=x(t,\psi)-F(t)$, then $y$ is the solution of the Volterra integro-differential equation,
\[
\dot{y}(t)  = \int_{[0,t]}y(t-s) \tilde{\mu}(ds)+f(t), \quad t \geq 0; \quad    y(0)  = \psi(0).
\]
Thus we can apply the theory developed for equation \eqref{eq. x} and noting as $F(t)$ is integrable, the Cesàro limits of $y(t)$ and $x(t,\psi)$ agree.
\end{remark}
As mentioned we shall always employ the assumption that the resolvent is integrable. The exact value of the integral of $r$ and $r_\tau$ are needed in many subsequent proofs so we include this as a lemma.
\begin{lemma} \label{lem. cesaro mean of r}
Let $r$ and $r_\tau$ be the solutions to equations \eqref{eq. r} and \eqref{eq. functional resolvent} respectively and assume $r,r_\tau \in L^1(\mathbb{R}_+;\mathbb{R})$. Then,
\begin{align*}
    &\int_0^\infty r(s)ds = \frac{-1}{\nu(\mathbb{R}_+)}; \quad  \int_0^\infty \dot{r}(s)ds = -1; \\
    &\int_0^\infty r_\tau(s)ds = \frac{-1}{\mu([-\tau,0])}; \quad  \int_0^\infty \dot{r}_{\tau}(s)ds = -1.
\end{align*}
\end{lemma}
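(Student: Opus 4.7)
The plan is to deduce both identities directly from the resolvent equation \eqref{eq. r}, using only the hypothesis $r \in L^1(\mathbb{R}_+;\mathbb{R})$ and Fubini's theorem. The key intermediate fact is that $r(t) \to 0$ as $t \to \infty$; once this is in hand, the first integral is immediate from the fundamental theorem of calculus and the second follows by integrating the convolution identity and swapping order of integration.

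First I would establish that $r' \in L^1(\mathbb{R}_+;\mathbb{R})$. From the estimate $|r'(t)| \le (|r|\ast|\nu|)(t)$ and the standard fact that the convolution of an $L^1$ function with a finite measure is $L^1$ (with $\|f\ast|\nu|\|_{L^1} \le \|f\|_{L^1}|\nu|(\mathbb{R}_+)$), we get $r' \in L^1$. Since $r$ is absolutely continuous with $r(0)=1$, we have $r(t)=1+\int_0^t r'(s)\,ds$, and the integrability of $r'$ forces the limit $r_\infty := \lim_{t\to\infty} r(t)$ to exist and be finite. Because $r \in L^1$, this limit must be $0$. Therefore
\[
\int_0^\infty r'(s)\,ds \;=\; \lim_{t\to\infty} r(t) - r(0) \;=\; 0 - 1 \;=\; -1,
\]
which gives the second identity.

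For the first identity I would integrate \eqref{eq. r} over $[0,\infty)$ and apply Fubini. Writing
\[
-1 \;=\; \int_0^\infty r'(t)\,dt \;=\; \int_0^\infty \!\int_{[0,t]} r(t-s)\,\nu(ds)\,dt \;=\; \int_{[0,\infty)} \!\int_s^\infty r(t-s)\,dt\,\nu(ds),
\]
and changing variable $u = t-s$ in the inner integral yields
\[
-1 \;=\; \int_{[0,\infty)} \!\int_0^\infty r(u)\,du\,\nu(ds) \;=\; \nu(\mathbb{R}_+)\int_0^\infty r(u)\,du.
\]
This forces $\nu(\mathbb{R}_+) \neq 0$ and rearranges to $\int_0^\infty r(u)\,du = -\nu(\mathbb{R}_+)^{-1}$. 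The use of Fubini is justified because the iterated integral $\int_{[0,\infty)}\int_s^\infty |r(t-s)|\,dt\,|\nu|(ds) = |\nu|(\mathbb{R}_+)\|r\|_{L^1}$ is finite.

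The functional case reduces to this one verbatim: the construction \eqref{eq. tildemu} lets us rewrite \eqref{eq. functional resolvent} as $r_\tau' = r_\tau \ast \tilde{\mu}$ on $[0,\infty)$, and $\tilde\mu \in M(\mathbb{R}_+;\mathbb{R})$ with $\tilde{\mu}(\mathbb{R}_+) = \mu([-\tau,0])$, so the two displays above applied to $(r_\tau,\tilde\mu)$ produce the corresponding identities. No step is genuinely hard here; the only point requiring a touch of care is justifying $r(t)\to 0$, for which the $L^1$ integrability of both $r$ and $r'$ is what one needs.
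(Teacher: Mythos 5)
Your proposal is correct and follows essentially the same route as the paper: integrate the resolvent equation, use $r(t)\to 0$ (a consequence of $r\in L^1$), and apply Fubini to the convolution term to obtain $-1=\nu(\mathbb{R}_+)\int_0^\infty r(s)\,ds$, with the functional case reduced via the $\tilde\mu$ construction. The only differences are cosmetic: you prove the limit $r(t)\to 0$ in detail (via $r'\in L^1$) where the paper simply asserts it, and you derive $\int_0^\infty r'(s)\,ds=-1$ first rather than second.
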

\begin{proof}[Proof of Lemma \ref{lem. cesaro mean of r}]
 We only provide the proof for $r$ as if one employs the convention outlined by \eqref{eq. tildemu} then the result for $r_\tau$ follows immediately. We note the assumption $r \in L^ 
 1(\mathbb{R}_+;\mathbb{R})$ guarantees $r \to 0$ as $t \to \infty$. Thus integrating equation \eqref{eq. r} over the interval $[0,T]$ we see, $r(T)=1+\int_0^T \int_{[0,t]}r(t-s)\nu(ds)dt.$ Sending $T \to \infty$ and applying Fubini's theorem we obtain,
  \begin{align*}
     -1=\int_0^\infty \int_{[0,t]} r(t-s)\nu(ds)dt&= \int_{[0,\infty]}\int_t^\infty r(t-s)dt\,\nu(ds)\\
     &= \int_0^\infty r(s)ds \nu(\mathbb{R}_+).
 \end{align*}
 To show the second assertion we integrate \eqref{eq. r} over $\mathbb{R}_+$ and apply the above calculation, i.e $\int_0^\infty\dot{r}(t)dt=\int_0^\infty \int_{[0,t]} r(t-s)\nu(ds)dt=-1$.
\end{proof}

\section{Cesàro functions}
In this section we discuss some facts about members of the function space $\text{Ces}(\mathbb{R}_+;\mathbb{R})$. The main results of this paper rely on a decomposition of the forcing function $f$. The general result has the following flavour, let $V$ represent some function space of interest.  If $\int_\cdot^{\cdot+\theta} f(s)ds \in V$ for each $\theta \in (0,1]$ then $f=f_1+f_2$ such that $f_1 \in V$ and $\int_0^\cdot f_2(s)ds \in V$. Successfully improving the classical admissibility theory is completely determined by whether or not one can prove such a decomposition result. This is accomplished by Lemmas \ref{lem. decomposition lemma part 1} and \ref{lem. decomposition lemma part 2} below for $V=\text{Ces}(\mathbb{R}_+;\mathbb{R})$.

\begin{lemma} \label{lem. decomposition lemma part 1}
Assume $f \in L^1_{loc}(\mathbb{R_+};\mathbb{R})$. Suppose for all $\theta \in (0,1],$ $t\mapsto \int_t^{t+\theta}f(s)ds \in \textup{Ces}(\mathbb{R}_+;\mathbb{R})$, i.e
    \[
    \lim_{t\to \infty} \frac{1}{t}\int_0^t \int_{s}^{s+\theta}f(u)duds = C(\theta) \in \mathbb{R}.
    \]
    Then for any fixed $\theta$,
    \[
    \sup_{\substack{t \in [1,\infty) \\T \in [0,\theta]}}\left| \frac{1}{t}\int_0^t \int_s^{s+T} f(u)duds-C(T)\right| < \infty.
    \]
\end{lemma}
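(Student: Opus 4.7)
My plan is to reduce the claim to a uniform boundedness statement via Fubini, and then combine the Baire category theorem with a scaling identity to propagate the resulting bound. Setting $F(x):=\int_0^x f(u)\,du$ and switching the order of integration yields
\[
H_t(T):=\frac{1}{t}\int_0^t\int_s^{s+T}f(u)\,du\,ds=\frac{1}{t}\int_t^{t+T}F(u)\,du-\frac{1}{t}\int_0^T F(u)\,du.
\]
The second summand is bounded by $\theta\|F\|_{L^\infty([0,\theta])}$ uniformly for $t\geq 1$ and $T\in[0,\theta]$. Splitting the inner integral in $H_t(T_1+T_2)$ at $s+T_1$ and passing to the limit $t\to\infty$ (using the hypothesis at $T_1$, $T_2$, and $T_1+T_2$) yields the additivity $L(T_1+T_2)=L(T_1)+L(T_2)$ whenever $T_1,T_2,T_1+T_2\in(0,1]$; as $L$ is measurable (being the pointwise limit of the continuous maps $T\mapsto H_t(T)$), Cauchy's functional equation forces $L(T)=cT$ for some $c\in\mathbb{R}$, hence $|L(T)|\leq|c|\theta$. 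It therefore suffices to show that $I_t(T):=\tfrac{1}{t}\int_t^{t+T}F(u)\,du$ is uniformly bounded for $t\geq 1$ and $T\in[0,\theta]$.

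For each $T\in(0,\theta]$ the hypothesis yields $I_t(T)\to L(T)$ as $t\to\infty$, so $M(T):=\sup_{t\geq 1}|I_t(T)|<\infty$ (and $M(0)=0$). Since $F$ is continuous, each $T\mapsto I_t(T)$ is continuous, making $M$ lower semi-continuous on $[0,\theta]$. The sub-level sets $A_n:=\{T\in[0,\theta]:M(T)\leq n\}$ are then closed and cover $[0,\theta]$, so the Baire category theorem produces $n_0\in\mathbb{N}$ and an interval $[a,b]\subseteq A_{n_0}$ with $a<b$, giving $|I_t(T)|\leq n_0$ uniformly for $t\geq 1$ and $T\in[a,b]$.

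To extend this estimate to the whole of $[0,\theta]$ I exploit the scaling identity
\[
I_t(T_1+T_2)=I_t(T_1)+\frac{t+T_1}{t}\,I_{t+T_1}(T_2),
\]
obtained by splitting the integral at $t+T_1$, noting $(t+T_1)/t\leq 1+\theta$ on the parameter range. For $T\in[0,b-a]$, the decomposition $b=T+(b-T)$ with $b-T\in[a,b]$ rearranges to $|I_t(T)|\leq|I_t(b)|+(1+\theta)|I_{t+T}(b-T)|\leq(2+\theta)n_0$. Any $T\in[0,\theta]$ can be written as a finite sum $T=s_1+\cdots+s_m$ with each $s_i\in[0,b-a]$ and $m\leq\lceil\theta/(b-a)\rceil$; iterating the identity finitely many times produces a constant $C_\theta$ with $|I_t(T)|\leq C_\theta$ uniformly in $t\geq 1$ and $T\in[0,\theta]$. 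Combined with the earlier reductions this gives the claimed sup estimate.

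The principal obstacle is the Baire step, which upgrades the pointwise convergence supplied by the hypothesis to uniform boundedness on a sub-interval of positive length; the subsequent extension is mechanical once the scaling identity is recognised, and it relies on the uniform bound $(t+T_1)/t\leq 1+\theta$ keeping the iterative multiplicative constant finite.
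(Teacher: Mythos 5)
Your argument is correct, and it reaches the conclusion by a genuinely different mechanism than the paper at the key ``uniform boundedness'' step. The paper introduces the sets $Q_m=\{T\in[0,\theta]:\,|\frac1t\int_0^t\int_s^{s+T}f(u)\,du\,ds-L(T)|\le 1 \text{ for all } t\ge m\}$, verifies their measurability, extracts one of positive Lebesgue measure, and invokes the Steinhaus-type difference-set lemma \cite[Lemma 15.9.3]{GLS} together with the additivity $L(T-T')=L(T)-L(T')$ to obtain a uniform bound for small $T$ and large $t$. You instead reduce, via $H_t(T)=\frac1t\int_t^{t+T}F(u)\,du-\frac1t\int_0^T F(u)\,du$, to the quantity $I_t(T)$, note that $M(T)=\sup_{t\ge1}|I_t(T)|$ is finite and lower semicontinuous, apply the Baire category theorem to find a nondegenerate subinterval $[a,b]$ on which $M\le n_0$, and then propagate the bound with the telescoping identity $I_t(T_1+T_2)=I_t(T_1)+\frac{t+T_1}{t}I_{t+T_1}(T_2)$. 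Both proofs are ``category/measure plus additivity of $L$'' arguments, but yours is self-contained (no appeal to the measure-theoretic lemma from \cite{GLS}) and has the merit of explicitly delivering the estimate on the full range $t\in[1,\infty)$, $T\in[0,\theta]$, whereas the paper's passage from the bound on $T\in[0,\varepsilon)$, $t\ge m'+\theta$ to the stated supremum is left implicit; the paper's route, on the other hand, stays closer to the template of \cite[Lemma 15.9.2]{GLS} on which the rest of the article is modelled. A minor remark: the linearity $L(T)=cT$ is not actually needed for your reduction, since boundedness of $L$ on $[0,\theta]$ also follows a posteriori from the uniform bound on $I_t(T)$; invoking it is harmless, and mirrors the paper's own appeal to measurable solutions of Cauchy's functional equation.
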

\begin{proof}[Proof of Lemma \ref{lem. decomposition lemma part 1}]
Introduce for some $m \in \mathbb{N}$, the set,
\[
Q_m \coloneqq \left\{ T \in [0,\theta] \colon \left| \frac{1}{t}\int_0^t \int_s^{s+T} f(u)duds-C(T)\right| \leq 1, \text{ for all } t \geq m\right\}.
\]
First we show this set is measurable. The mapping $t \mapsto \left| \frac{1}{t}\int_0^t \int_s^{s+T} f(u)duds-C(T)\right|$ is continuous thus,
\begin{align*}
    Q_m & = \left\{ T \in [0,\theta] \colon \left| \frac{1}{t}\int_0^t \int_s^{s+T} f(u)duds-C(T)\right| \leq 1, \text{ for all } t \in \mathbb{Q}\cap [m,\infty) \right\}\\
    & = \bigcap_{t \in \mathbb{Q}\cap [m,\infty)} \left\{ T \in [0,\theta] \colon \left| \frac{1}{t}\int_0^t \int_s^{s+T} f(u)duds-C(T)\right| \leq 1 \right\}.
\end{align*}
Now for each $t$ introduce, $F_t(T): [0,\theta] \to \mathbb{R}_+ : T \mapsto \left| \frac{1}{t}\int_0^t \int_s^{s+T} f(u)duds-C(T)\right|$. Then we can write,
\[
Q_m = \bigcap_{t \in \mathbb{Q}\cap [m,\infty)} F_t(T)^{-1}([0,1]).
\]
Thus to show $Q_m$ is measurable we need only show that $F_t(T)$ is a measurable function, but this is immediate as $C(T)$ is the pointwise limit of measurable functions and thus measurable, and the mapping $T \mapsto \frac{1}{t}\int_0^t \int_s^{s+T} f(u)duds$ is continuous, thus $F_t(T)$ is a measurable function and so $Q_m$ is a well defined measurable set. Additionally it follows from our supposition that $\bigcup_{m=1}^\infty Q_m=[0,\theta]$, thus there must exists an $m' \in \mathbb{N}$ such that $Q_{m'}$ has non zero Lebesgue measure. Fix such an $m'$, then by Lemma 15.9.3 in Gripenberg et. al. \cite{GLS}, it must by the case that the set $Q_{m'}-Q_{m'}=\{T-T' : T,T' \in Q_{m'}\}$ contains an open interval $(-\varepsilon,\varepsilon)$. Without loss of generality take $T,T' \in Q_{m'}$ such that $T>T'$, then we have
\begin{align*}
\Big| \frac{1}{t}&\int_0^t \int_s^{s+T-T'} f(u)duds-C(T-T') \Big| \\
& = \left| \frac{1}{t}\int_0^t \int_s^{s+T-T'} f(u)duds-C(T) + C(T') \right|\\
& \leq \left| \frac{1}{t}\int_0^t \int_{s-T'}^{s+T-T'} f(u)duds-C(T) \right| + \left| \frac{1}{t}\int_0^t \int_{s-T'}^{s} f(u)duds-C(T') \right|\\
& \leq 2,
\end{align*}
for all $t \geq m'+\theta$. To see why the first equality holds consider
\[
\frac{1}{t}\int_0^t \int_s^{s+T-T'} f(u)duds = \frac{1}{t}\int_0^t \int_{s-T'}^{s+T-T'} f(u)duds - \frac{1}{t}\int_0^t \int_{s-T'}^{s} f(u)duds. 
\]
Taking limits on both sides yields
\begin{equation} \label{eq. Limit L(theta) is additive}
  C(T-T') = C(T)- C(T').  
\end{equation}
Now as the above bound holds for all $T \in Q_{m'}-Q_{m'}$, we have in particular for all $T \in [0,\varepsilon)$,
\[
\Big| \frac{1}{t}\int_0^t \int_s^{s+T} f(u)duds-C(T) \Big| \leq 2, \quad \text{ for all } t\geq m'+\theta.
\]
Thus we have
\[
\sup_{\substack{t \in [1,\infty) \\T \in [0,\theta]}}\left| \frac{1}{t}\int_0^t \int_s^{s+T} f(u)duds-C(T)\right| < \infty.
\]  
\end{proof}
\begin{remark}
    The identity \eqref{eq. Limit L(theta) is additive} tells us that when the mapping $t \mapsto \int_t^{t+ \theta}f(s)ds \in \text{Ces}(\mathbb{R}_+;\mathbb{R})$, the limit is an additive function of the parameter $\theta$. This along with measurability ensures the limit has the form $C \theta$ for some $C \in \mathbb{R}$, \cite[Theorem 1.1.8]{BG:RegularVariation}. In the sequel we shall always write the limit in this explicit form.
\end{remark}
\begin{lemma} \label{lem. decomposition lemma part 2}
Let $f \in L^1_{loc}(\mathbb{R_+};\mathbb{R})$. For each $\theta \in (0,1]$, the following are equivalent,

\begin{enumerate}
    \item[(i)]
    \[
    \lim_{t\to \infty} \frac{1}{t}\int_0^t \int_{s}^{s+\theta}f(u)duds = C \theta.
    \]
    \item[(ii)] $f=f_1+f_2$, where $f_1 \in C(\mathbb{R}_+;\mathbb{R})$ and $f_2 \in L^1_{loc}(\mathbb{R_+};\mathbb{R})$ such that,
    \begin{align*}
        \lim_{t \to \infty} \frac{1}{t}\int_0^tf_1(u)du= C, && \lim_{t \to \infty} \frac{1}{t}\int_0^t \int_0^sf_2(u)duds= \frac{C  \theta}{2}.
    \end{align*} 
\end{enumerate}
\end{lemma}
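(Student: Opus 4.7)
My plan is to handle the two implications separately; the forward direction (ii)$\Rightarrow$(i) is a direct Fubini calculation, while (i)$\Rightarrow$(ii) requires an explicit decomposition and is the technically harder step. For (ii)$\Rightarrow$(i), writing $F_i(s) := \int_0^s f_i(u)\,du$ for $i = 1,2$, the identity $\int_s^{s+\theta} f_i(u)\,du = F_i(s+\theta) - F_i(s)$ together with a change of variable gives
\[
\int_0^t [F_i(s+\theta) - F_i(s)]\,ds = \int_t^{t+\theta} F_i(u)\,du - \int_0^\theta F_i(u)\,du;
\]
the second term vanishes after dividing by $t$. For $i = 1$, the hypothesis $F_1(t)/t \to L$ yields $\tfrac{1}{t}\int_t^{t+\theta} F_1(u)\,du \to L\theta$ via uniform convergence of $F_1(t+v)/t \to L$ on $v \in [0,\theta]$. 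For $i = 2$, setting $\Phi(t) := \int_0^t F_2(s)\,ds$, the hypothesis $\Phi(t)/t \to L\theta/2$ gives $\tfrac{1}{t}\int_t^{t+\theta} F_2(u)\,du = \Phi(t+\theta)/t - \Phi(t)/t \to 0$. Summing these two contributions recovers (i).

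For (i)$\Rightarrow$(ii), I would take the backward moving average
\[
f_1(t) := \frac{1}{\theta}\int_{(t-\theta)_+}^t f(u)\,du, \qquad f_2 := f - f_1.
\]
Continuity of $f_1$ is immediate from $f \in L^1_{loc}$. A direct antidifferentiation yields $\int_0^t f_1(s)\,ds = [H(t) - H(t-\theta)]/\theta$ for $t \geq \theta$, where $G(s) := \int_0^s f(u)\,du$ and $H(t) := \int_0^t G(s)\,ds$. Since (i) is equivalent to $[H(t+\theta) - H(t)]/t \to L\theta$, a shift in $t$ gives $[H(t) - H(t-\theta)]/(\theta t) \to L$, so $\tfrac{1}{t}\int_0^t f_1(s)\,ds \to L$, verifying the first condition of (ii).

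Verifying the Cesàro condition on $F_2(s) := \int_0^s f_2(u)\,du$ is the step I expect to be the main obstacle. Applying Fubini to $\int_0^t F_2(s)\,ds = \int_0^t (t-u) f_2(u)\,du$ together with the explicit formula for $\int_0^t f_1$ reduces the matter to showing
\[
\frac{H(t)}{t} - \frac{1}{\theta t}\int_{t-\theta}^t H(u)\,du \longrightarrow \frac{L\theta}{2}.
\]
An integration by parts recasts the left-hand side as $\tfrac{1}{\theta t}\int_0^\theta u\,G(u+t-\theta)\,du$. Writing $G(v) = Lv + \phi(v)$, the $Lv$ piece evaluates directly to $L\theta/2 + O(1/t)$, while the $\phi$ piece has to be shown to be $o(1)$; this delicate step is handled by iterating the identity $H(t) = H(t-\theta) + H(\theta) + \Psi(t-\theta)$, where $\Psi(s) := \int_0^s [G(u+\theta) - G(u)]\,du$, invoking $\Psi(s)/s \to L\theta$ from (i), and exploiting the uniform estimates furnished by Lemma~\ref{lem. decomposition lemma part 1}. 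As a sanity check, the choice $f \equiv L$ produces $F_2(s) = L\theta/2$ identically for $s \geq \theta$, confirming the precise constant $L\theta/2$.
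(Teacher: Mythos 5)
Your (ii)$\Rightarrow$(i) direction is correct and clean, and in (i)$\Rightarrow$(ii) you choose exactly the paper's decomposition $f_1(t)=\frac{1}{\theta}\int_{(t-\theta)_+}^t f$, verify the first limit correctly, and your reduction of the second limit to
\[
\frac{1}{\theta t}\int_0^\theta u\,G(u+t-\theta)\,du \;=\; \frac{1}{\theta t}\int_0^\theta \bigl[H(t)-H(t-v)\bigr]dv \;\longrightarrow\; \frac{L\theta}{2}
\]
is an identity equivalent to the one the paper uses. The gap is in how you propose to prove this limit. Writing $G(v)=Lv+\phi(v)$ and claiming the $\phi$-part is $o(1)$ "by iterating $H(t)=H(t-\theta)+H(\theta)+\Psi(t-\theta)$" cannot work: iterating that identity only controls increments $H(t)-H(t-n\theta)$ along the arithmetic progression of step $\theta$, whereas the quantity above averages the increments $H(t)-H(t-v)$ over \emph{all} widths $v\in[0,\theta]$. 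In fact no argument using only the single-width hypothesis can succeed: take $G(t)=t\cos(2\pi t/\theta)$, $f=G'$. Then $H(t+\theta)-H(t)=\frac{\theta^2}{2\pi}\sin(2\pi t/\theta)$, so (i) holds for this $\theta$ with $L=0$, yet $\frac{1}{\theta t}\int_0^\theta u\,G(u+t-\theta)\,du=\frac{\theta}{2\pi}\sin(2\pi t/\theta)+o(1)$ oscillates, and indeed (ii) fails for this $f$ (any decomposition as in (ii) would force (i) at every width $v$, which fails here e.g.\ at $v=\theta/2$). So the lemma must be read, as in its use in Theorem~\ref{thm. cesaro limit of x} and in the hypothesis of Lemma~\ref{lem. decomposition lemma part 1}, with (i) available for all window widths.

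The missing argument — which is what the paper actually does — is: for each $v\in(0,\theta]$, the hypothesis at width $v$ gives $\frac{1}{t}\bigl[H(t)-H(t-v)\bigr]=\frac{1}{t}\int_0^t\int_{s-v}^s f(u)\,du\,ds\to Lv$; Lemma~\ref{lem. decomposition lemma part 1} gives a bound on $\bigl|\frac{1}{t}\int_0^t\int_{s-v}^s f(u)\,du\,ds-Lv\bigr|$ uniform in $t\ge 1$ and $v\in[0,\theta]$; and then Arzelà's bounded convergence theorem lets you pass the limit through $\frac{1}{\theta}\int_0^\theta(\cdot)\,dv$, yielding $\frac{1}{\theta}\int_0^\theta Lv\,dv=\frac{L\theta}{2}$. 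Your citation of Lemma~\ref{lem. decomposition lemma part 1} already presupposes the all-widths hypothesis, so once you adopt it the iteration device is unnecessary and the bounded-convergence step is the correct (and essentially forced) way to close the proof.
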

\begin{proof}[Proof of Lemma \ref{lem. decomposition lemma part 2}]
Fix a $\theta \in (0,1]$, extend $f(t)$ to be zero when $t<0$ and set $f_1(t)=\frac{1}{\theta}\int_{t-\theta}^t f(u)du$. Thus by hypothesis we immediately obtain the first limit while the local integrability of $f$ ensures $f_1$ is continuous on the positive real half line. Next we set $f_2=f-f_1$ which yields,
\begin{equation} \label{eq. identity for integral of f_2}
    \int_0^tf_2(s)ds = \int_0^tf(s)ds-\int_0^t\frac{1}{\theta}\int_{s-\theta}^sf(u)duds=\frac{1}{\theta}\int_0^\theta\int_{t-v}^tf(s)dsdv.
\end{equation}
The derivation of this identity is detailed in the appendix of \cite{AL:2023(AppliedMathLetters)}. Now consider,
\[
 \frac{1}{t}\int_0^t \int_0^sf_2(u)duds = \frac{1}{\theta}\int_0^\theta  \frac{1}{t}\int_0^t \int_{s-v}^sf(u)dudsdv.
\]
Subtracting $\frac{1}{\theta}\int_0^{\theta} v C \,dv$ from both sides  and taking absolute values yields,
\[
\left| \frac{1}{t}\int_0^t \int_0^sf_2(u)duds - \frac{C}{\theta}\int_0^{\theta} v dv \right| \leq \frac{1}{\theta}\int_0^\theta \left| \frac{1}{t}\int_0^t \int_{s-v}^sf(u)duds-v C \right| dv.
\]
By Lemma \ref{lem. decomposition lemma part 1}, the integrand on the RHS is uniformly bounded, thus we can invoke Arzelà's dominated convergence theorem . Hence,
\[
\lim_{t\to \infty} \left| \frac{1}{t}\int_0^t \int_0^sf_2(u)duds - \frac{1}{\theta}\int_0^{\theta} v C\, dv \right| = 0,
\]
as required and for the converse consider,
\begin{align*}
    \frac{1}{t}\int_0^t\int_{s-\theta}^sf(u)duds = \frac{1}{t}\int_0^t&\int_{s-\theta}^sf_1 (u)duds + \frac{1}{t}\int_0^t\int_{0}^sf_2(u)duds\\
    & - \frac{1}{t}\int_0^t\int_{0}^{s-\theta}f_2(u)duds.
\end{align*}
The two terms involving $f_2$ will cancel when we pass to the limit, so we need only focus on the $f_1$ term. Consider for some $k >0$ and $t > k$,
\begin{align*}
    \frac{1}{t}\int_0^t\int_{s-\theta}^sf_1 (u)duds & = \frac{1}{t}\int_0^t\int_{0}^{s-\theta}f_1(u)duds - \frac{1}{t}\int_0^t\int_{0}^sf_1(u)duds\\
    & = \frac{1}{t}\int_0^k\int_{0}^sf_1(u)duds + \frac{1}{t}\int_k^t\int_{0}^sf_1(u)duds\\
    & \qquad - \frac{1}{t}\int_0^k\int_{0}^{s-\theta}f_1(u)duds -\frac{1}{t}\int_k^t\int_{0}^{s-\theta}f_1(u)duds.
\end{align*}
The first and third terms will vanish as $t \to \infty$ so we focus on the second and fourth.
\begin{align*}
    \frac{1}{t}\int_k^t\int_{0}^sf_1(u)duds & - \frac{1}{t}\int_k^t\int_{0}^{s-\theta}f_1(u)duds \\
    & = \frac{1}{t}\int_k^t s \left[\frac{\int_{0}^sf_1(u)du}{s}-C\right]ds + \frac{C}{t} \int_k^t s \,ds\\
    & \quad - \frac{1}{t}\int_k^t (s-\theta) \left[\frac{\int_{0}^{s-\theta}f_1(u)du}{s-\theta}-C\right]ds - \frac{C}{t} \int_k^t (s-\theta) \,ds.
\end{align*}
Combining the second and fourth integrals yields,
\begin{align*}
\theta  C  \frac{(t-k)}{t} \quad \longrightarrow \quad \theta  C \quad  \text{ as } t \to \infty.
\end{align*}
Thus to complete the proof we show the terms involving $f_1$ vanish. We have
\begin{align*}
    \frac{1}{t}\int_k^t s \left[\frac{\int_{0}^sf_1(u)du}{s}-C\right]ds- \frac{1}{t}\int_k^t (s-\theta) \left[\frac{\int_{0}^{s-\theta}f_1(u)du}{s-\theta}-C\right]ds.
\end{align*}
Letting $u=s-\theta$ in the second integral and combining the two yields,
\[
\frac{1}{t}\int_{t-\theta}^t s \left[\frac{\int_{0}^sf_1(u)du}{s}-C\right]ds-\frac{1}{t}\int_{k-\theta}^k s \left[\frac{\int_{0}^sf_1(u)du}{s}-C\right]ds.
\]
The second term will vanish and the first term can be estimated by,
\[
\left|\frac{1}{t}\int_{t-\theta}^t s \left[\frac{\int_{0}^sf_1(u)du}{s}-C\right]ds\right| \leq \int_{t-\theta}^t  \left|\frac{\int_{0}^sf_1(u)du}{s}-C\right|ds.
\]
The integrand on the right hand side converging to zero forces the entire integral to vanish and thus the theorem is proven.
\end{proof}
With this fundamental decomposition at hand we state and prove one more crucial lemma before providing our main result.
\begin{lemma} \label{lem. Convolution with finite measure or L1 function}
    Let $\nu \in M(\mathbb{R}_+)$ and $g \in L^1(\mathbb{R}_+)$.  Suppose $f \in \textup{Ces}(\mathbb{R}_+;\mathbb{R})$ with Cesàro limit, $C \in \mathbb{R}$. Then,
    \[
    \lim_{t \to \infty} \frac{1}{t}\int_0^t (f \ast \nu )(s)ds = C \nu(\mathbb{R}_+); \quad  \lim_{t \to \infty} \frac{1}{t}\int_0^t (f \ast g )(s)ds = C \int_0^\infty g(s)ds.
    \]
\end{lemma}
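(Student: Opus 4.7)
The strategy is to apply Fubini's theorem to rewrite the Cesàro average of the convolution as an integral against $\nu$ (respectively $g$) of a quantity that converges pointwise, and then close with dominated convergence. Define $F(t) := \int_0^t f(v)\, dv$, so the hypothesis $f \in Ces(\mathbb{R}_+;\mathbb{R})$ says exactly $F(t)/t \to L$.

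For the first assertion, Fubini (which applies because $|\nu|$ is finite and $f$ is locally integrable, making $|f|$ integrable on $\{(s,u): 0 \leq u \leq s \leq t\}$ against $\text{Leb} \otimes |\nu|$) yields
\[
\frac{1}{t}\int_0^t (f \ast \nu)(s)\, ds = \int_{[0,t]} \frac{F(t-u)}{t}\, \nu(du).
\]
For each fixed $u \geq 0$, I factor $\frac{F(t-u)}{t} = \frac{t-u}{t} \cdot \frac{F(t-u)}{t-u}$, so the integrand converges pointwise to $L$ as $t \to \infty$. To apply dominated convergence against the finite positive measure $|\nu|$, I need a uniform bound. Since $F(s)/s \to L$, there is $T_0$ with $|F(s)/s| \leq |L|+1$ for $s \geq T_0$; local integrability of $f$ makes $F$ continuous, hence bounded by some $K$ on $[0,T_0]$. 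Splitting the cases $t - u \geq T_0$ and $t - u < T_0$ gives $|F(t-u)/t| \leq \max(|L|+1,\, K) =: M$ uniformly for $t \geq 1$, $u \in [0,t]$. Dominated convergence against $|\nu|$ then delivers the limit $L \cdot \nu(\mathbb{R}_+)$.

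The $g \in L^1$ case is structurally identical. Fubini gives
\[
\frac{1}{t}\int_0^t (f \ast g)(s)\, ds = \int_0^\infty g(u)\,\mathbf{1}_{[0,t]}(u)\, \frac{F(t-u)}{t}\, du,
\]
the integrand converges a.e.\ to $L\, g(u)$, and is dominated by the integrable function $M|g(u)|$ using the same uniform bound as above. Dominated convergence then yields $L \int_0^\infty g(s)\, ds$.

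The only delicate point I anticipate is producing the uniform bound near the ``edge'' $u \approx t$, where the window $[0,t-u]$ on which $f$ is sampled shrinks to zero; the factorisation $\frac{t-u}{t} \cdot \frac{F(t-u)}{t-u}$ handles this transparently, as the prefactor is bounded by $1$ while the tail of $f$ contributes only through the already--controlled Cesàro quotient, and the remaining small--$t$ piece is tamed by continuity of $F$. With that in hand, both statements reduce to routine applications of DCT.
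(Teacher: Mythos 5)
Your proof is correct. You share the paper's first step exactly: Fubini (justified by finiteness of $|\nu|$ and local integrability of $f$) reduces the Cesàro average of $f\ast\nu$ to $\int_{[0,t]}\frac{F(t-u)}{t}\,\nu(du)$ with $F(t)=\int_0^t f$. Where you diverge is in how the limit is closed. The paper proceeds by a fully explicit $\varepsilon$-argument: it fixes constants $K$, $T$, $\alpha$ tied to the Cesàro convergence of $F(s)/s$, the tail mass $|\nu|([T,\infty))$, and a threshold $\alpha t$, then splits the integration region (near $u\approx t$, the midrange, and $u\in[0,\alpha t]$ versus $[\alpha t,t-K]$) and estimates each piece by $\varepsilon/5$; it also treats $|L|>0$ and $L=0$ separately. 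You instead observe that for each fixed $u$ the integrand $\mathbf{1}_{[0,t]}(u)\,F(t-u)/t\to L$, produce the uniform bound $M=\max(|L|+1,K)$ by splitting at a threshold $T_0$ (using continuity of $F$ on $[0,T_0]$ for the shrinking-window edge $u\approx t$), and invoke dominated convergence against the finite measure $|\nu|$; the $L^1$ case is handled identically with $M|g|$ as dominating function. Your route is shorter, treats $L=0$ and $L\neq 0$ uniformly, and proves the $g\in L^1$ statement directly rather than viewing it (as the paper does) as the special case of a measure with density $g$; the paper's version buys completely elementary, quantitative estimates at the cost of bookkeeping. Both are sound, and your handling of the $t$-dependent domain via the indicator and of the edge $u\approx t$ via the factorisation $\frac{t-u}{t}\cdot\frac{F(t-u)}{t-u}$ is exactly the point that needed care.
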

\begin{proof}[Proof of Lemma \ref{lem. Convolution with finite measure or L1 function}]
We only prove the statement for convolutions with finite measures, as the result for $L^1$ functions is a special case wherein the measure is absolutely continuous with respect to the Lebesgue measure. We show the difference $\left|\frac{1}{t}\int_0^t (f \ast \nu )(s)ds - C \nu(\mathbb{R}_+)\right|$, tends to zero as $t \to \infty$. Let $F(t)\coloneqq \int_0^t f(s)ds$, then this difference (after an application of Fubini's theorem) becomes,
\[
\left| \frac{1}{t}\int_{[0,t]} F(t-u) \nu(du)-C \nu(\mathbb{R}_+)\right|.
\]
We first consider only the case when $|C|>0$ as the case when $C=0$ is simpler and follows from an analogous argument. Let $\varepsilon \in (0,1)$ be arbitrary and fix the positive constants $ K, T$ and $\alpha$ such that,
\begin{align*}
    \left|\frac{1}{K}\int_0^Kf(s)ds-C \right| < \frac{\varepsilon}{5 |\nu|(\mathbb{R}_+)}; &&
    |\nu|([T,\infty))  < \frac{\varepsilon}{5|C|};
\end{align*}
\begin{align*}
    \alpha  < \frac{\varepsilon}{5}  \min\left\{\frac{1}{|C| |\nu|(\mathbb{R}_+)}, \left(1-\frac{K}{K+1}\right)\right\}.
\end{align*}
Then fix,
\[
t > \max\left\{\left(K+1\right),\left(T+K\right),\frac{T}{\alpha},\left(\frac{5 |\nu|(\mathbb{R}_+)}{\varepsilon} \sup_{s \in [0,K]}\left|\int_0^s f(s)ds\right|\right)\right\}.
\]
Now consider,
\begin{align*}
    \Big| \frac{1}{t}\int_{[0,t]}  & F(t-u) \nu(du)-C \nu(\mathbb{R}_+)\Big|\\
    & \leq \left| \frac{1}{t}\int_{[0,t-K]} F(t-u) \nu(du)-C \nu(\mathbb{R}_+)\right| + \frac{|\nu|(\mathbb{R}_+)}{t} \sup_{s \in [0,K]}\left|\int_0^s f(s)ds\right|\\
    & < \left| \frac{1}{t}\int_{[0,t-K]} F(t-u) \nu(du)-C \nu(\mathbb{R}_+)\right| + \frac{\varepsilon}{5}.
\end{align*}
Next we make the substitution $s=t-u$,
\begin{align*}
   \Big| \frac{1}{t}\int_{[0,t-K]} & F(t-u) \nu(du)-C \nu(\mathbb{R}_+)\Big| + \frac{\varepsilon}{5}\\
   & = \Big| \frac{1}{t}\int_{[K,t]} s \left[\frac{F(s)}{s}-C\right] \nu(ds) + \frac{C}{t} \int_{[K,t]}s \nu(ds) -C\nu(\mathbb{R}_+) \Big| + \frac{\varepsilon}{5}\\
   & \leq \Big|\frac{C}{t} \int_{[K,t]}s \nu(ds) -C\nu(\mathbb{R}_+) \Big| + \int_{[K,t]}\left|\frac{F(s)}{s}-C\right||\nu|(ds) + \frac{\varepsilon}{5}\\
   & < \Big|\frac{C}{t} \int_{[K,t]}s \nu(ds) -C\nu(\mathbb{R}_+) \Big| + \frac{\varepsilon |\nu|([K,t])}{5 |\nu|(\mathbb{R}_+)} + \frac{\varepsilon}{5}\\
   & \leq \Big|\frac{C}{t} \int_{[K,t]}s \nu(ds) -C\nu(\mathbb{R}_+) \Big| + \frac{2\varepsilon}{5}.
\end{align*}
Now we undo the previous substitution and set $u=t-s$,
\begin{align*}
    \Big|\frac{C}{t} \int_{[K,t]} & s \nu(ds) -C\nu(\mathbb{R}_+) \Big| + \frac{2\varepsilon}{5}\\
    & = \Big|\frac{C}{t} \int_{[0,t-K]}(t-u) \nu(du) -C\nu(\mathbb{R}_+) \Big| + \frac{2\varepsilon}{5}\\
    & \leq \Big| C \nu([0,t-K])-C \nu(\mathbb{R}_+)\Big| + \Big|\frac{C}{t} \int_{[0,t-K]}u\,\nu(du) \Big| + \frac{2\varepsilon}{5}\\
    & < \frac{\varepsilon}{5} + \Big|\frac{C}{t} \int_{[0,t-K]}u\,\nu(du) \Big| + \frac{2\varepsilon}{5},
\end{align*}
where the last inequality follows from the fact that $t>T+K$ and
\[
| C \nu([0,t-K])-C \nu(\mathbb{R}_+)|=|C| |\nu|([t-K,\infty)).
\]
Now our definition of $\alpha$ ensures $\alpha t < t-K$ when $t> K+1$, thus we can write
\begin{align*}
    \Big|\frac{C}{t} \int_{[0,t-K]} & u\,\nu(du) \Big| + \frac{3\varepsilon}{5} \\
    & = \Big|\frac{C}{t} \int_{[0,\alpha t]}u\,\nu(du) +\frac{C}{t} \int_{[\alpha t,t-K]}u\,\nu(du) \Big| + \frac{3\varepsilon}{5}\\
    & \leq \alpha\,|C| |\nu|(\mathbb{R_+})+ \frac{|C|}{t}\int_{[\alpha t,t-K]} |u|\, |\nu|(du) + \frac{3\varepsilon}{5}\\
    & <\frac{\varepsilon}{5} +\frac{|C|}{t}\int_{[\alpha t,t]} |u|\, |\nu|(du) + \frac{3\varepsilon}{5}\\
    & <\frac{\varepsilon}{5} +|C||\nu|([\alpha t,\infty)) + \frac{3\varepsilon}{5}.
\end{align*}
Now finally as $\alpha t > T$ we have, $|C| |\nu|([\alpha t,\infty)) + \frac{4\varepsilon}{5} < \frac{\varepsilon}{5} + \frac{4\varepsilon}{5} = \varepsilon$.
\end{proof}
\section{Results}
We can now state and prove the main result of the paper.
\begin{theorem} \label{thm. cesaro limit of x}
Let $x$ be the solution of equation \eqref{eq. x} and assume $r \in L^1(\mathbb{R_+};\mathbb{R})$. Then the following statements are true,
    \begin{itemize}
        \item[(i)] $ \int_\cdot^{\cdot+\theta}f(s)ds \in \textup{Ces}(\mathbb{R}_+;\mathbb{R})$ for all $\theta \in (0,1] \iff x \in \textup{Ces}(\mathbb{R}_+;\mathbb{R})$.
        \item[(ii)] \,\,$f \in \textup{Ces}(\mathbb{R}_+;\mathbb{R}) \iff x \in \textup{Ces}(\mathbb{R}_+;\mathbb{R})$ and $\dot{x} \in \textup{Ces}(\mathbb{R}_+;\mathbb{R})$.
    \end{itemize}
    
Moreover in each case the limits are given explicitly, that is as $t \to \infty$,
\begin{itemize}
    \item[(i)]
    \begin{align*}
        \frac{1}{t}\int_0^t\int_{s}^{s+\theta}f(u)duds \longrightarrow C \theta; &&  \frac{1}{t}\int_0^t x(s)ds \longrightarrow \frac{-C}{\nu(\mathbb{R}_+)}.
    \end{align*}
    \item[(ii)]
        \begin{align*}
        \frac{1}{t} \int_0^tf(s)ds \longrightarrow C; &&  \frac{1}{t}\int_0^t x(s)ds \longrightarrow \frac{-C}{\nu(\mathbb{R}_+)}; &&  \frac{1}{t}\int_0^t \dot{x}(s)ds \longrightarrow 0.
    \end{align*}
\end{itemize}
    where $C \in \mathbb{R}$.
\end{theorem}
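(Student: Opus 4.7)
The plan is to work through the variation of constants representation \eqref{eq. VOC x}, $x(t) = r(t)\xi + (r \ast f)(t)$. Because $r \in L^1(\mathbb{R}_+;\mathbb{R})$, the homogeneous term $r(t)\xi$ has zero Cesàro limit (Remark~\ref{rem. L1 functions have a trvial Cesàro limit}), so every claim reduces to analysing the convolution $r \ast f$. Throughout I would exploit the explicit constants $\int_0^\infty r = -1/\nu(\mathbb{R}_+)$ and $\int_0^\infty r' = -1$ furnished by Lemma~\ref{lem. cesaro mean of r}, together with Lemma~\ref{lem. Convolution with finite measure or L1 function}, which transports Cesàro limits across convolutions with finite measures or $L^1$ functions.

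For the forward direction of (i), I would invoke Lemma~\ref{lem. decomposition lemma part 2} at $\theta = 1$ to split $f = f_1 + f_2$, where $f_1 \in C(\mathbb{R}_+;\mathbb{R})$ has Cesàro limit $L$ and the primitive $F_2(t) := \int_0^{t} f_2(s)\,ds$ has Cesàro limit $L/2$. Lemma~\ref{lem. Convolution with finite measure or L1 function} immediately delivers $\tfrac{1}{t}\int_0^t (r \ast f_1)(s)\,ds \to -L/\nu(\mathbb{R}_+)$. For the $f_2$ piece I would integrate by parts in the convolution (using absolute continuity of $r$ together with $r(0)=1$ and $F_2(0)=0$) to obtain
\[
(r \ast f_2)(t) = F_2(t) + (r' \ast F_2)(t).
\]
Since $r' = r \ast \nu \in L^1(\mathbb{R}_+;\mathbb{R})$ by the total variation estimate in the preliminaries, Lemma~\ref{lem. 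Convolution with finite measure or L1 function} gives $\tfrac{1}{t}\int_0^t(r' \ast F_2)(s)\,ds \to -L/2$, which cancels $\tfrac{1}{t}\int_0^t F_2(s)\,ds \to L/2$. Hence $r \ast f_2$ has Cesàro limit zero, and summing the two pieces gives the asserted Cesàro limit of $x$.

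For the converse of (i), I would integrate \eqref{eq. x} over $[t,t+\theta]$ and rearrange to
\[
\int_t^{t+\theta} f(s)\,ds = [x(t+\theta) - x(t)] - \int_t^{t+\theta} (x \ast \nu)(s)\,ds,
\]
then average in $t$. The bracketed term has zero Cesàro mean by the standard shift manipulation $\tfrac{1}{T}\int_0^T x(t+\theta)\,dt = \tfrac{T+\theta}{T}\cdot\tfrac{1}{T+\theta}\int_0^{T+\theta}x - \tfrac{1}{T}\int_0^\theta x \to M$, while Fubini rewrites the convolution contribution as $\int_0^\theta \tfrac{1}{T}\int_v^{T+v}(x \ast \nu)(u)\,du\,dv$. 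Lemma~\ref{lem. Convolution with finite measure or L1 function} forces each inner average to tend to $M\nu(\mathbb{R}_+)$ (with $M$ the Cesàro limit of $x$), uniformly bounded in $v \in [0,\theta]$, so Arzelà's bounded convergence theorem yields $M\nu(\mathbb{R}_+)\theta$ and identifies $L = -M\nu(\mathbb{R}_+)$.

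Part (ii) is then essentially a corollary of (i). If $f \in Ces(\mathbb{R}_+;\mathbb{R})$ with limit $L$, Fubini gives the hypothesis of (i) with $L(\theta) = L\theta$, so (i) delivers the Cesàro limit of $x$; the Cesàro limit of $x'$ is then read off from $x'(t) = (x \ast \nu)(t) + f(t)$ and Lemma~\ref{lem. Convolution with finite measure or L1 function} as $M\nu(\mathbb{R}_+) + L = 0$. The converse reverses the same identity. The main obstacle I anticipate is the forward direction of (i): the hypothesis controls only interval averages of $f$, not $f$ itself, so Lemma~\ref{lem. Convolution with finite measure or L1 function} is not directly applicable. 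The decomposition from Lemma~\ref{lem. decomposition lemma part 2} is the essential device, and the integration-by-parts identity $(r \ast f_2)(t) = F_2(t) + (r' \ast F_2)(t)$ is what produces the exact cancellation needed for the primitive piece; the uniform boundedness required for Arzelà's theorem in the converse is a minor technical step supplied again by Lemma~\ref{lem. Convolution with finite measure or L1 function}.
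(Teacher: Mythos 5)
Your proposal is correct and follows essentially the same route as the paper's proof: the decomposition of Lemma~\ref{lem. decomposition lemma part 2} combined with the integration-by-parts identity $(r\ast f_2)(t)=F_2(t)+(r'\ast F_2)(t)$ and the constants from Lemmas~\ref{lem. cesaro mean of r} and~\ref{lem. Convolution with finite measure or L1 function} for the forward part of (i), integrating \eqref{eq. x} over a window of length $\theta$ and averaging for the converse, and reading (ii) off the equation itself via Lemma~\ref{lem. Convolution with finite measure or L1 function}. The only cosmetic deviations are fixing $\theta=1$ in the decomposition, handling the windowed convolution term by Fubini plus bounded convergence instead of repeating the converse computation of Lemma~\ref{lem. decomposition lemma part 2} with $x\ast\nu$ in place of $f_1$, and routing the forward direction of (ii) through (i) (which needs the elementary implication $f\in Ces \Rightarrow \int_\cdot^{\cdot+\theta}f\in Ces$ with limit $L\theta$, proved directly in the paper's Section~\ref{sec. example}) rather than applying Lemma~\ref{lem. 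Convolution with finite measure or L1 function} directly to $r\ast f$.
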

\begin{proof}[Proof of Theorem \ref{thm. cesaro limit of x}]
First we prove $(i)$. Similarly to the proof of Theorem 1 in \cite{AL:2023(AppliedMathLetters)}, using the variation of constants formula \eqref{eq. VOC x}, Lemma \ref{lem. decomposition lemma part 2} and integrating by parts,
\[
x(t) = r(t)\xi+(r\ast f_1)(t) +f_3(t)+(\dot{r} \ast f_3)(t), \quad t\geq 0,
\]
where $f_3(t):=\int_0^tf_2(s)ds$. Thus we have,
\[
\frac{1}{t}\int_0^t x(s)ds = \frac{1}{t}\int_0^t r(s)\xi ds+\frac{1}{t}\int_0^t(r\ast f_1)(s)ds +\frac{1}{t}\int_0^tf_3(s)ds+\frac{1}{t}\int_0^t(\dot{r} \ast f_3)(s)ds.
\]
Sending $t \to \infty$ and applying Lemmas \ref{lem. decomposition lemma part 2} and \ref{lem. Convolution with finite measure or L1 function} we see,
\[
\lim_{t \to \infty} \frac{1}{t}\int_0^t x(s)ds =  \frac{-C}{\nu(\mathbb{R}_+)}.
\]
For the converse we consider an extended solution so that $x(t)=0$ for all $t<0$, then we integrate \eqref{eq. VOC x} over the interval $[t-\theta,t]$ which results in,
\[
\int_{t-\theta}^tf(u)du=x(t)-x(t-\theta)-\int_{t-\theta}^t \int_{[0,s]}x(s-u)\nu(du)ds.
\]
Introducing the notation $X(t)\coloneqq (x \ast \nu)(t)$ we obtain,
\begin{align*}
   \frac{1}{t}\int_0^t \int_{s-\theta}^s f(u)duds= \frac{1}{t}\int_0^t x(s)ds - \frac{1}{t}\int_0^t x(s-\theta)ds - \frac{1}{t}\int_0^t \int_{s-\theta}^s X(u)duds.
\end{align*}
When passing to the limit the first two terms on the RHS will cancel, thus we need only focus on the third. Now recall by Lemma \ref{lem. Convolution with finite measure or L1 function},
\[
\frac{1}{t}\int_0^t X(s)ds \longrightarrow \frac{-C}{\nu(\mathbb{R}_+)}  \nu(\mathbb{R}_+)=-C,
\]
as $t \to \infty $. Thus we may follow the proof of the converse of Lemma \ref{lem. decomposition lemma part 2} with $X$ in place of $f_1$ to obtain,
\[
\lim_{t \to \infty} \frac{1}{t}\int_0^t \int_{s-\theta}^s X(u)duds =- \theta  C.
\]
But this implies,
\[
\lim_{t \to \infty} \frac{1}{t}\int_0^t \int_{s-\theta}^s f(u)duds = \theta C,
\]
as required. For the forward implication of $(ii)$, recall equation \eqref{eq. VOC x}, $x(t)=r(t)\xi + (r \ast f)(t)$. As $r \in L^1(\mathbb{R}_+;\mathbb{R})$ it's Cesàro limit will be zero, thus,
\[
\lim_{t \to \infty} \frac{1}{t}\int_0^t x(s)ds = \lim_{t \to \infty} \frac{1}{t}\int_0^t (r \ast f)(s)ds= \frac{-C}{\nu(\mathbb{R}_+)},
\]
where the last equality follows from Lemma \ref{lem. cesaro mean of r} and \ref{lem. Convolution with finite measure or L1 function}. We now use this to show the Cesàro limit for $\dot{x}$. By \eqref{eq. x} we obtain,
\[
\frac{1}{t}\int_0^t \dot{x}(s)ds = \frac{1}{t}\int_0^t (x \ast \nu)(s)ds + \frac{1}{t}\int_0^t f(s)ds.
\]
Passing to the limit and applying Lemma \ref{lem. Convolution with finite measure or L1 function} and the limit just proven for $x$ yields,
\[
\frac{1}{t}\int_0^t \dot{x}(s)ds = \frac{-C}{\nu(\mathbb{R}_+)}\nu(\mathbb{R}_+) + C=0.
\]
For the converse, rearrange equation \eqref{eq. x} to obtain $f(t)=\dot{x}(t)-\int_{[0,t]}x(t-s)\nu(ds)$. By supposition the Cesàro limit of $\dot{x}$ is zero and Lemma \ref{lem. Convolution with finite measure or L1 function} tells us the Cesàro limit of the convolution is $\frac{C}{\nu(\mathbb{R}_+)}\nu(\mathbb{R}_+)=C$, as required.
\end{proof}
The assumptions that $r,r_\tau \in L^1(\mathbb{R}_+;\mathbb{R})$ are known to be sharp in order to obtain admissibility results for solutions to equations \eqref{eq. x} and \eqref{eq. Functional x }. We provide a converse result for the finite memory problem which shows that $r_\tau \in L^1(\mathbb{R}_+;\mathbb{R})$ is a necessary condition in order for solutions of $\eqref{eq. Functional x }$ to admit a Cesàro limit. Throughout the proof we make heavy use of the semi-explicit asymptotic expansion of the resolvent, this approach is much more involved in the Volterra case due to the lack of information about the general form of the Volterra resolvent. It is in this regard that we only consider the finite memory equation.
\begin{prop} \label{prop. r integrable converse}
Let $x$ be the solution of \eqref{eq. Functional x }. If for each $\psi \in C([-\tau,0];\mathbb{R})$ and $f \in L^1_{loc}(\mathbb{R}_+;\mathbb{R})$ such that $t \mapsto \int_t^{t+\theta}f(s)ds \in \textup{Ces}(\mathbb{R}_+;\mathbb{R})$ for all $\theta \in (0,1]$, we have $x(\cdot,\psi) \in \textup{Ces}(\mathbb{R}_+;\mathbb{R})$, then $r_\tau \in L^1(\mathbb{R}_+;\mathbb{R})$.
\end{prop}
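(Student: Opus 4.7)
The plan is to argue by contrapositive: assuming $r_\tau \notin L^1(\mathbb{R}_+;\mathbb{R})$, I will exhibit an admissible pair $(\psi,f)$—with $\psi \in C([-\tau,0];\mathbb{R})$ and $t \mapsto \int_t^{t+\theta} f(s)\,ds \in Ces(\mathbb{R}_+;\mathbb{R})$ for every $\theta \in (0,1]$—such that the solution $x(\cdot,\psi)$ fails to lie in $Ces(\mathbb{R}_+;\mathbb{R})$. The principal tool, as hinted by the authors, is the classical semi-explicit asymptotic expansion of the fundamental solution of a linear autonomous DDE in terms of the characteristic function $\Delta(\lambda) := \lambda - \int_{[-\tau,0]} e^{\lambda u}\,\mu(du)$; namely
\[
r_\tau(t) = \sum_{k=1}^{N} p_k(t)\, e^{\lambda_k t} + \rho(t),
\]
where the $\lambda_k$ are the finitely many roots of $\Delta$ with $\operatorname{Re}(\lambda_k) \geq -\varepsilon$ for some $\varepsilon > 0$, each $p_k$ is a polynomial of degree less than the multiplicity of $\lambda_k$, and $\rho \in L^1(\mathbb{R}_+;\mathbb{R})$. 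Non-integrability of $r_\tau$ forces at least one root $\lambda_*$ with $\operatorname{Re}(\lambda_*) \geq 0$, and the counterexample is tailored to $\lambda_*$.

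Three cases are handled. \emph{Case (a):} if $\operatorname{Re}(\lambda_*) > 0$, take $f \equiv 0$ and $\psi(t) = \operatorname{Re}(e^{\lambda_* t})$ on $[-\tau,0]$. By uniqueness of solutions, $x(t,\psi) = \operatorname{Re}(e^{\lambda_* t})$ for $t \geq 0$, which is exponentially unbounded, so its Cesàro mean diverges, while the interval averages of $f \equiv 0$ trivially belong to $Ces(\mathbb{R}_+;\mathbb{R})$. \emph{Case (b):} if $\lambda_* = 0$, then $\Delta(0) = -\mu([-\tau,0]) = 0$, and the expansion gives a leading contribution of the form $C + Dt + \dotsb$ with nonzero leading coefficient (obtained from the residue $1/\Delta'(0)$ when the root is simple, or from the Laurent coefficients when multiple). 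Taking $\psi \equiv 0$ and $f \equiv 1$, the interval averages satisfy $\int_t^{t+\theta} f(s)\,ds = \theta$ identically, while $x(t) = \int_0^t r_\tau(s)\,ds$ grows at least linearly, so $\frac{1}{t}\int_0^t x(s)\,ds$ diverges. \emph{Case (c):} if $\operatorname{Re}(\lambda_*) = 0$ and $\omega := \operatorname{Im}(\lambda_*) \neq 0$, the dominant part of $r_\tau$ oscillates as $A\cos(\omega t) + B\sin(\omega t)$ with $(A,B) \neq (0,0)$, modulated by polynomials if the multiplicity exceeds one. Choose $\psi \equiv 0$ and the resonant forcing $f(t) = \cos(\omega t)$; since $\int_0^t f(s)\,ds$ is bounded, the interval averages of $f$ lie in $Ces(\mathbb{R}_+;\mathbb{R})$ with limit zero. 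The convolution $(r_\tau \ast f)(t)$ produces a secular term of order $t\cos(\omega t)$, whose Cesàro mean $\frac{1}{t}\int_0^t s\cos(\omega s)\,ds$ behaves like $\omega^{-1}\sin(\omega t)$ and thus does not converge.

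The main obstacle is Case (c): one must verify that the oscillatory resonance genuinely yields non-cancelling secular growth and is not neutralised by contributions from the other rightmost roots or by the $L^1$ remainder $\rho$. This is handled by isolating the resonant frequency in $r_\tau \ast \cos(\omega \cdot)$: only the $e^{\pm i\omega t}$ components of $r_\tau$ contribute secular terms (manifesting as a double pole at $\pm i\omega$ of the Laplace transform of $r_\tau \ast f$), while roots $\lambda_k \neq \pm i\omega$ yield bounded or $L^1$ contributions whose Cesàro means vanish by lemma \ref{lem. Convolution with finite measure or L1 function}. A parallel treatment accounts for the conjugate root $-i\omega$ and for higher multiplicities, and the proposition follows once the three cases are dispatched.
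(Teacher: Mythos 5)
Your proposal follows essentially the same route as the paper's proof: pass to the characteristic function $h(\lambda)=\lambda-\int_{[-\tau,0]}e^{\lambda s}\mu(ds)$ and the finite spectral expansion of $r_\tau$, kill the case $\operatorname{Re}(\lambda_*)>0$ with $f=0$ and the history $\psi=\operatorname{Re}(e^{\lambda_* t})$, and treat rightmost roots on the imaginary axis by switching off the history ($\psi=0$) and forcing at the resonant frequency; the paper's choice $f(t)=k_1\sin(\beta_1 t)-c_1\cos(\beta_1 t)$ plays exactly the role of your $f(t)=\cos(\omega t)$. Your case (b) with $f\equiv 1$ is a clean direct treatment of a simple root at $\lambda_*=0$, a configuration the paper's ``without loss of generality $\beta_1\neq 0$'' quietly passes over, so that is a small point in your favour.

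The one step that does not hold as written is in your case (c): you assert that all components of $r_\tau$ attached to roots $\lambda_k\neq\pm i\omega$ contribute only bounded or $L^1$ terms with vanishing Cesàro mean. If some other purely imaginary root $\pm i\beta_j$ has multiplicity $\geq 2$, its contribution to $r_\tau$ contains $t\cos(\beta_j t)$-type terms, whose convolution with $\cos(\omega t)$ is neither bounded nor integrable, and whose Cesàro mean is a non-vanishing oscillation at frequency $\beta_j$; Lemma \ref{lem. Convolution with finite measure or L1 function} does not remove it, and your ``parallel treatment'' remark does not supply the missing argument. The repair is easy and you should make it explicit: either observe that a sum of sinusoids at distinct nonzero frequencies with at least one nonzero amplitude cannot converge, or, as the paper does, eliminate multiple imaginary roots first (take $f=0$ and $\psi(t)=t\cos(\operatorname{Im}(\lambda)t)$, whose Cesàro mean visibly fails to converge), so that the resonant-forcing argument is only ever run when every imaginary root is simple and your boundedness claim is then correct.
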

For the readers convenience, before we provide a proof of Theorem \ref{prop. r integrable converse} we recall some common notation in the field of functional equations. Let,
\begin{equation} \label{eq. x0 functional equation}
    x_0(t,\psi) \coloneqq r_{\tau}(t)\psi(0) + \int_{[-\tau,0]}\left(\int_s^0r_{\tau}(t+s-u)\psi(u)du\right)\mu(ds),
\end{equation}
so we can rewrite \eqref{eq. VOC functional x} as $ x(t,\psi)=x_0(t,\psi)+(r_\tau \ast f)(t)$. Thus $x_0$ solves equation \eqref{eq. Functional x } with the perturbation term switched off. The roots of the transcendental equation, $h(\lambda)=\lambda -  \int_{[-\tau,0]} e^{-\lambda s}\mu(ds)$, govern the asymptotic behaviour of $r_\tau$. Let $\Lambda = \{ \lambda \in \mathbb{C} : h(\lambda)=0 \}$ and define $v_0(\mu) \coloneqq \sup \{ \text{Re}(\lambda): \lambda \in \Lambda\}$. The cardinality of $\Lambda' = \{ \lambda \in \Lambda : \text{Re}(\lambda) = v_0(\mu) \}$ is finite, thus $r_\tau$ admits the expansion,
\begin{equation} \label{eq. expansion of functional r}
e^{-v_0(\mu) t}r_\tau(t)= \sum_{\lambda_j \in \Lambda'} \left\{ p_j(t)\cos(\beta_j t)+ q_j(t)\sin(\beta_j t)\right\}+g(t),
\end{equation}
where $p_j$ and $q_j$ are polynomials of degree $m_j-1$ (where $m_j$ is the multiplicity of $\lambda_j$), $\beta_j = \text{Im}(\lambda_j)$ and $g(t)=o(e^{-\varepsilon t })$ for some $\varepsilon >0$. For proofs of all facts recalled above the reader may consult \cite[Chapter I]{Diekmann}.
\begin{proof}[Proof of Theorem \ref{prop. r integrable converse}]
    We aim to prove $v_0(\mu)<0$ which yields automatically $r_\tau \in L^1(\mathbb{R}_+;\mathbb{R})$ due to the expansion \eqref{eq. expansion of functional r}. We proceed by contradiction, thus assume $v_0(\mu) > 0$. Setting $f=0$ yields $x_0(\cdot,\psi) \in \textup{Ces}(\mathbb{R}_+;\mathbb{R})$ for all $\psi \in C([-\tau,0];\mathbb{R})$. Hence select $\lambda \in \Lambda'$ and set $\psi= \text{Re}(e^{\lambda t})$ which yields the solution, $x_0(t;\psi)=\text{Re}(e^{\lambda t})$ for all $t \in [-\tau,\infty)$. Let $\lambda=v_0(\mu)+i\beta$, then
    \[
    \int_0^t\text{Re}(e^{\lambda s})ds=\int_0^t e^{v_0(\mu)s}\cos(\beta s)ds=\frac{e^{v_0(\mu)t}\left(\beta\sin(v_0(\mu)t)+v_0(\mu)\cos(\beta t)\right)}{|\lambda|^2},
    \]
    which upon dividing by $t$ will not tend to a limit, contradicting $x_0(\cdot,\psi) \in \textup{Ces}(\mathbb{R}_+;\mathbb{R})$. Thus we must have $v_0(\mu) \leq 0$. Now assume $v_0(\mu)=0$, in which we distinguish two cases. First assume there exists a $\lambda \in \Lambda'$ with multiplicity strictly great than one. In this case we fix $\psi=t\cos(\text{Im}(\lambda)t)$ which again yields a solution $x_0(t;\psi)=t\cos(\text{Im}(\lambda)t)$ for all $t \in [-\tau,\infty)$, but as before a simple calculation shows
    \[
    \frac{1}{t}\int_0^t s\cos(\text{Im}(\lambda)s)ds=\frac{\sin(\text{Im}(\lambda)t)}{\text{Im}(\lambda)}+ \frac{\cos(\text{Im}(\lambda)t)-1}{t\text{Im}(\lambda)^2},
    \]
    which oscillates indefinitely and so $x_0(\cdot;\psi) \notin \text{Ces}(\mathbb{R}_+;\mathbb{R})$, a contradiction. Thus it must be when $v_0(\mu)=0$, all elements of $\Lambda'$ have multiplicity one. In this case,
    \[
    r_\tau(t)= \sum_{j=1}^n \left\{ c_j\cos(\beta_j t)+ k_j\sin(\beta_j t)\right\}+g(t),
    \]
    where $c_j,k_j \in \mathbb{R}\setminus\{0\}$. Now we fix the initial condition $\psi=0$ which yields, $x(t,0)=(r_\tau \ast f)(t)$. Now assume first that $n>1$ which means at least one zero is not at the origin. Without loss of generality we may assume $\beta_1 \neq 0$. Choose $f(t) = k_1\sin(\beta_1 t)-c_1\cos(\beta_1 t)$ and define $F(t)\coloneqq \int_0^tf(s)ds$, we see $\int_0^t x(u,0)du=\int_0^t r_\tau(u)F(t-u)du$ with,
    \[
    F(t)=\frac{c_1}{\beta_1}\cos(\beta_1 t)+\frac{k_1}{\beta_1}\sin(\beta_1 t)-\frac{c_1}{\beta_1}.
    \]
    Now we can calculate $\int_0^t x(u,0)du$ semi-explicitly,
    \begin{align*}
        &\frac{1}{t}\int_0^t r_\tau(u)F(t-u)du \\
        &= \sin(\beta_1 t)\left(\frac{c_1k_1}{\beta_1}+\frac{(c_1^2-k_1^2)}{2t\beta_1^2}\right) + \cos(\beta_1t)\left(\frac{c_1k_1}{t\beta_1^2}+\frac{(c_1^2-k_1^2)}{2\beta_1}\right)-\frac{c_1k_1}{t\beta_1^2}\\
        & \quad +\frac{1}{t} \sum_{j=1}^n \left\{A_{1,j}\cos(\beta_j t)+A_{2,j}\sin(\beta_j t)+A_{3,j}\cos(\beta_1 t)+A_{4,j}\sin(\beta_1 t) + A_{5,j}\right\}\\
        & \quad + \frac{1}{t}\int_0^t g(u)F(t-u)du,
    \end{align*}
     where $A_{i,j}\in \mathbb{R}$. The summation term will vanish as $t \to \infty$ and the exponential estimate on $g$ ensures the integral term will also vanish. However we see the first term will oscillate indefinitely as the constants $c_1,k_1$ and $\beta_1$ are non-zero. This means $x(\cdot;0) \notin \text{Ces}(\mathbb{R}_+;\mathbb{R})$ which is a contradiction. Now if $n=1$ there is only a single root on the imaginary axis. If this root is not at the origin then the above argument still holds, so we assume the sole root is at the origin. In this case $r_\tau(t)=c+g(t)$ where $c \in \mathbb{R}\backslash\{0\}$. Now set $\psi=0$ and $f(t)=1/c$, then $x(t,0)=t+(g \ast f)(t)$. As $g \in L^1(\mathbb{R}_+;\mathbb{R})$ and $f \in \text{Ces}(\mathbb{R}_+;\mathbb{R})$, Lemma \ref{lem. Convolution with finite measure or L1 function} implies the convolution has a Ces\'aro limit but the linear function $f(t)=t$ will not, which yields the final contradiction. Thus we must have that $v_0(\mu)<0$ and the claim is proven.
\end{proof}
Proposition \ref{prop. r integrable converse}, Theorem \ref{thm. cesaro limit of x} and remark \ref{rem. L1 functions have a trvial Cesàro limit}, provide the proof of our next theorem.
\begin{theorem} \label{thm. Main theorem for functional equations}
    Let $x(\cdot,\psi)$ be the solution to \eqref{eq. Functional x }.
    \begin{itemize}
        \item[(\textbf{A})] \,\,Suppose $r_\tau \in L^1(\mathbb{R}_+;\mathbb{R})$, then for all $\psi \in C([-\tau,0];\mathbb{R})$ the following hold true,
        \begin{itemize}
            \item[(i)] $ \int_\cdot^{\cdot+\theta}f(s)ds \in \textup{Ces}(\mathbb{R}_+;\mathbb{R})$ for all $\theta \in (0,1] \iff x(\cdot,\psi) \in \textup{Ces}(\mathbb{R}_+;\mathbb{R})$.
            \item[(ii)]\, $f \in \textup{Ces}(\mathbb{R}_+;\mathbb{R}) \iff x(\cdot,\psi) \in \textup{Ces}(\mathbb{R}_+;\mathbb{R})$ and $\dot{x}(\cdot,\psi) \in \textup{Ces}(\mathbb{R}_+;\mathbb{R})$.
        \end{itemize}
        Moreover in each case the limits are given explicitly, that is as $t \to \infty$,
\begin{itemize}
    \item[(i)]
    \begin{align*}
        \frac{1}{t}\int_0^t\int_{s}^{s+\theta}f(u)duds \to C \theta; &&  \frac{1}{t}\int_0^t x(s,\psi)ds \to \frac{-C}{\mu([-\tau,0])}.
    \end{align*}
    \item[(ii)]
        \begin{align*}
        \frac{1}{t} \int_0^tf(s)ds \to C; &&  \frac{1}{t}\int_0^t x(s,\psi)ds \to \frac{-C}{\mu([-\tau,0])}; &&  \frac{1}{t}\int_0^t \dot{x}(s,\psi)ds \to 0.
    \end{align*}
\end{itemize}
    where $C \in \mathbb{R}$.
        \item[(\textbf{B})]\,\, The following are equivalent,
        \begin{itemize}
            \item[(i)] $r_\tau \in L^1(\mathbb{R}_+;\mathbb{R})$.
            \item[(ii)]\, For all  $\psi \in C([-\tau,0];\mathbb{R})$ and $f \in \textup{Ces}(\mathbb{R}_+;\mathbb{R})$  we have $x(\cdot,\psi) \in \textup{Ces}(\mathbb{R}_+;\mathbb{R})$.
            \item[(iii)]\, For all  $\psi \in C([-\tau,0];\mathbb{R})$ and $f$ s.t. $ \int_\cdot^{\cdot+\theta}f(s)ds \in \textup{Ces}(\mathbb{R}_+;\mathbb{R})$ for all $\theta \in (0,1]$ we have $x(\cdot,\psi) \in \textup{Ces}(\mathbb{R}_+;\mathbb{R})$.
        \end{itemize}
    \end{itemize}
\end{theorem}
\section{Example} \label{sec. example}
In this section we show the statements $f \in \text{Ces}(\mathbb{R}_+;\mathbb{R})$ and $t \mapsto \int_t^{t+\theta}f(s)ds \in \text{Ces}(\mathbb{R}_+;\mathbb{R})$ for each $\theta \in (0,1]$ are in fact not equivalent. In general the first implies the second, to see this recall that if $f \in \text{Ces}(\mathbb{R}_+;\mathbb{R})$ then by Theorem \ref{thm. cesaro limit of x} part $(ii)$, we necessarily have solutions of \eqref{eq. x} obeying $x \in \text{Ces}(\mathbb{R}_+;\mathbb{R})$. But then applying part $(i)$ of Theorem \ref{thm. cesaro limit of x} yields $t \mapsto \int_t^{t+\theta}f(s)ds \in \text{Ces}(\mathbb{R}_+;\mathbb{R})$ for each $\theta \in (0,1]$. This argument, although mathematically sound, is unsatisfactory due to its indirect nature. Indeed this can be proven in a direct fashion with an apt application of Lemma \ref{lem. Convolution with finite measure or L1 function}. Let $f \in \text{Ces}(\mathbb{R}_+;\mathbb{R})$ and introduce the function $g(t)=\theta^{-1}\chi_{\{t \in [0,\theta]\}}(t)$. Then $g \in L^1(\mathbb{R}_+;\mathbb{R})$, hence by Lemma \ref{lem. Convolution with finite measure or L1 function} $(f\ast g) \in \text{Ces}(\mathbb{R}_+;\mathbb{R})$, but for $t>\theta$,
\[
(f\ast g)(t)=\int_0^\theta g(s)f(t-s)ds= \frac{1}{\theta}\int_{t-\theta}^t f(s)ds.
\]
Thus we have proven as claimed that,
\begin{equation} \label{eq. forward implication of Cesaro space inclusion}
    f \in \text{Ces}(\mathbb{R}_+;\mathbb{R}) \implies t \mapsto \int_t^{t+\theta}f(s)ds \in \text{Ces}(\mathbb{R}_+;\mathbb{R}) \text{ for all } \theta \in (0,1].
\end{equation}
We state a counter example that shows the reverse implication is in general false, consider $f:\mathbb{R}_+\to \mathbb{R}: t \mapsto t^{\alpha+1}\sin(t^{\alpha+1}) \quad \alpha >0$. This claim can be verified after some tedious but elementary calculations. Although the reverse implication of \eqref{eq. forward implication of Cesaro space inclusion} is not true in general we provide a side condition which yields the converse in the case that the function is positive.
\begin{prop} \label{prop. equivalence of Cesàro limits for f>0}
Let $f \in L^1_{loc}(\mathbb{R}_+;\mathbb{R}_+)$. The following are equivalent,
\begin{align*}
     & (i) \lim_{t \to \infty}\frac{1}{t}\int_0^t\int_{s}^{s+\theta}f(u)duds = C \theta  \text{ for all } \theta \in (0,1]; \quad \lim_{t \to \infty}\frac{1}{t}\int_t^{t+1}f(s)ds=0.\\
    & (ii)     \lim_{t \to \infty} \frac{1}{t}\int_0^t f(s)ds = C.
\end{align*}
\end{prop}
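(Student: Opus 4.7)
My approach is to apply the decomposition lemma \ref{lem. decomposition lemma part 2} with $\theta = 1$ together with the explicit identity \eqref{eq. identity for integral of f_2} from its proof, using the positivity of $f$ at a single crucial estimate.

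The direction $(ii) \Rightarrow (i)$ is essentially automatic. The interval-average limit is exactly the forward implication \eqref{eq. forward implication of Cesaro space inclusion} which has already been established (and requires no sign condition on $f$). For the auxiliary vanishing condition I would write
\begin{equation*}
    \frac{1}{t}\int_t^{t+1}f(s)\,ds = \frac{t+1}{t}\cdot\frac{1}{t+1}\int_0^{t+1}f(s)\,ds - \frac{1}{t}\int_0^tf(s)\,ds \longrightarrow L - L = 0.
\end{equation*}

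For the harder direction $(i) \Rightarrow (ii)$, I would first invoke Lemma \ref{lem. decomposition lemma part 2} with $\theta = 1$ to write $f = f_1 + f_2$ with $f_1(t) = \int_{t-1}^t f(u)\,du$ (extending $f$ by zero on the negative half line) and $\frac{1}{t}\int_0^t f_1(s)\,ds \to L$. Using identity \eqref{eq. identity for integral of f_2} with $\theta = 1$, we then obtain
\begin{equation*}
    \int_0^t f(u)\,du - \int_0^t f_1(s)\,ds = \int_0^t f_2(s)\,ds = \int_0^1 \int_{t-v}^t f(s)\,ds\,dv.
\end{equation*}

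Because $f \geq 0$ and $[t-v, t] \subseteq [t-1, t]$ for every $v \in [0, 1]$, the right-hand side is nonnegative and bounded above by $\int_{t-1}^t f(s)\,ds$. The hypothesis $\frac{1}{t}\int_t^{t+1}f(s)\,ds \to 0$ is equivalent, via the shift $t \mapsto t-1$ and $t/(t-1) \to 1$, to $\frac{1}{t}\int_{t-1}^t f(s)\,ds \to 0$. Dividing the displayed identity by $t$ and passing to the limit thus gives $\frac{1}{t}\int_0^t f(u)\,du \to L$, as required. The positivity hypothesis enters precisely at the crude bound $\int_{t-v}^t f \leq \int_{t-1}^t f$; without it, the hypothesis on the one-sided interval averages alone does not suffice to tame the boundary remainder, and this is exactly the obstruction exhibited by the sign-changing counterexample of Proposition \ref{prop. example}. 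This is the only nontrivial step of the proof.
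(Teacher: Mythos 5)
Your proposal is correct, and the easy direction $(ii)\Rightarrow(i)$ is handled exactly as in the paper. For the substantive direction $(i)\Rightarrow(ii)$ your route differs in bookkeeping but not in the key idea: the paper does not invoke Lemma \ref{lem. decomposition lemma part 2} at all, but instead applies Fubini directly to the double integral to get, for $t>\theta$,
\begin{equation*}
\frac{1}{t}\int_0^t\int_{s}^{s+\theta}f(u)\,du\,ds=\frac{1}{t}\int_0^\theta f(u)(u-\theta)\,du+\frac{\theta}{t}\int_0^t f(u)\,du+\frac{1}{t}\int_t^{t+\theta}f(u)(t+\theta-u)\,du,
\end{equation*}
and then kills the first and third terms, the third precisely by positivity and the hypothesis $\frac{1}{t}\int_t^{t+1}f\to 0$. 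Your argument isolates the same boundary remainder near $t$ (your bound $0\le\int_0^1\int_{t-v}^t f\,ds\,dv\le\int_{t-1}^t f$ is the same use of positivity) but reaches it through the decomposition $f=f_1+f_2$ and the identity \eqref{eq. identity for integral of f_2}. That is legitimate, with two caveats worth noting: you are using the internal construction from the proof of Lemma \ref{lem. decomposition lemma part 2} (the specific $f_1(t)=\int_{t-1}^t f(u)\,du$ and \eqref{eq. identity for integral of f_2}), not just its statement, and that lemma's proof in turn drags in Lemma \ref{lem. decomposition lemma part 1} with its measurability and Arzel\`a machinery --- harmless here since hypothesis $(i)$ holds for all $\theta\in(0,1]$, but heavier than needed. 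In fact the only output of the lemma you actually use, namely $\frac{1}{t}\int_0^t f_1(s)\,ds\to L$, follows from the hypothesis by a one-line shift of variable, so your proof could be streamlined to match the paper's self-contained Fubini computation; what each approach buys is, for the paper, economy, and for yours, a transparent link to the decomposition philosophy underlying the rest of the article.
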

\begin{proof}[Proof of proposition \ref{prop. equivalence of Cesàro limits for f>0}]
The fact that $(ii)$ implies $(i)$ is clear from the discussion at the beginning of this section and from the identity,
\[
\frac{1}{t}\int_t^{t+1} f(s)ds = \frac{1}{t}\int_0^{t+1} f(s)ds-\frac{1}{t}\int_0^t f(s)ds.
\]
To see the other implication we write for $t> \theta$,
\begin{align}
    \frac{1}{t}\int_0^t\int_{s}^{s+\theta}& f(u)duds\\ \nonumber
    &= \frac{1}{t}\int_0^\theta f(u)(u-\theta)du+\frac{\theta}{t}\int_0^tf(u)du+\frac{1}{t}\int_t^{t+\theta}f(u)(t+\theta-u)du.
\end{align}
The first and third term on the right hand side will vanish upon sending $t \to \infty$ and thus we obtain the desired limit.
\end{proof}
\section{Comparison with integral equations}
To this point we have considered integro-differential and functional equations and have shown solutions to perturbed equations may admit a Cesàro limit even when the perturbation function does not. This phenomena is impossible for integral equations, in this section we provide a proof. Consider the integral equation,
\begin{equation} \label{eq. x integral eq}
    x(t) = \int_0^tk(t-s)x(s)ds+f(t), \quad t \geq 0, \\
\end{equation}
where $f \in L^1_{loc}(\mathbb{R}_+;\mathbb{R})$ and $k \in L^1(\mathbb{R}_+;\mathbb{R})$. In this regime the integral resolvent of $k$ denoted by $r_k$ is the solution to the convolution equation $r_k=k+r_k\ast k$, on the halfline $\mathbb{R}_+$. The standard variation of constants formula yields, $x(t)=f(t)+(r_k \ast f)(t), t \geq 0$.
Now let $r_k \in L^1(\mathbb{R}_+;\mathbb{R})$. Assume $f \in \text{Ces}(\mathbb{R}_+;\mathbb{R})$ then Lemma \ref{lem. Convolution with finite measure or L1 function} implies $x \in \text{Ces}(\mathbb{R}_+;\mathbb{R})$. Conversely assume $x \in \text{Ces}(\mathbb{R}_+;\mathbb{R})$, then with $k \in L^1(\mathbb{R}_+;\mathbb{R})$ we can rearrange \eqref{eq. x integral eq} and apply Lemma \ref{lem. Convolution with finite measure or L1 function} to give $f \in \text{Ces}(\mathbb{R}_+;\mathbb{R})$.
\section{Conclusion} \label{sec. conclusion}
In this paper we have characterised when the solution of a perturbed linear functional differential equation lies in the space $\text{Ces}(\mathbb{R}_+;\mathbb{R})$ and also provided necessary and sufficient conditions to ensure the solution of a perturbed linear integro-differential Volterra equation lies in $\text{Ces}(\mathbb{R}_+;\mathbb{R})$\footnote{under the assumption of an integrable resolvent}. It is interesting to reflect on the improvements made with respect to the classical theory. For classical admissibility theory, three things are needed to ensure solutions of \eqref{eq. x} reside in a particular space V while operating under the assumptions that $r\in L^1(\mathbb{R}_+;\mathbb{R})$ and $\nu \in M(\mathbb{R}_+;\mathbb{R})$, namely:
\begin{itemize}
    \item[1.] The resolvent obeys, $r \in V$.
    \item [2.] If $g \in L^1(\mathbb{R}_+;\mathbb{R})$ then the operator $h \mapsto g \ast h$ must map $V$ into itself.
    \item[3.] The perturbation function obeys $f \in V$.
\end{itemize}
This paper along with \cite{AL:2023(AppliedMathLetters)} have shown that the third point can be significantly weakened. We can have $f \notin V$ and yet solutions still reside in $V$. This is contingent on the perturbation function admitting a decomposition of the form $f=f_1+f_2$ such that $f_1 \in V$ and $t\mapsto \int_0^tf_2(s)ds \in V$. Even after scrutiny of the proofs of Lemmas \ref{lem. decomposition lemma part 1} and \ref{lem. decomposition lemma part 2} it seems unclear how one may generally describe such spaces. However even with this said, it is the authors opinion that given any ``\textit{reasonable}'' space that is usually considered for admissibility theory that satisfies point 2 above, a decomposition result like the one described can be proven. Following the authors recent works \cite{AL:2023(AppliedNumMath),AL:2023(AppliedMathLetters),AL:2024(SVE_Lp)}  there is mounting evidence to suggest that in order to obtain sharp results for perturbed dynamical systems (deterministic or stochastic) the correct condition to study is \eqref{eq. interval average}. An interesting extension would be to consider emulating such methodology for infinite dimensional equations like those found in Prüss \cite{Pruss}.\\

\textbf{Acknowledgements:} EL is supported by Science Foundation Ireland (16/IA/4443). JA is supported by the RSE Saltire Facilitation Network on Stochastic Differential Equations: Theory, Numerics and Applications (RSE1832). EL would like to thank Ole Cañadas for insightful discussions resulting in the improvement of this work.


\begin{thebibliography}{10}


\bibitem{AP:2002(ECP)}
J.~A.~D. Appleby.
\newblock Almost sure stability of linear {I}t\^{o}-{V}olterra equations with damped stochastic perturbations.
\newblock {\em Electron. Comm. Probab.}, 7:223--234, 2002.

\bibitem{AP:2004(SubExpItoVol)}
J.~A.~D. Appleby.
\newblock Subexponential solutions of linear {I}t\^{o}-{V}olterra equations with a damped perturbation.
\newblock volume~11, pages 5--10. 2004.

\bibitem{AP:2021}
J.~A.~D. Appleby.
\newblock Mean square characterisation of a stochastic {V}olterra integrodifferential equation with delay.
\newblock {\em Int. J. Dyn. Syst. Differ. Equ.}, 11(3-4):194--226, 2021.


\bibitem{AF:2003(EJP)}
J.~A.~D. Appleby and A.~Freeman.
\newblock Exponential asymptotic stability of linear {I}t\^{o}-{V}olterra equations with damped stochastic perturbations.
\newblock {\em Electron. J. Probab.}, 8:no. 22, 22, 2003.

\bibitem{AL:2023(AppliedNumMath)}
J.~A.~D. Appleby and E.~Lawless.
\newblock Mean square asymptotic stability characterisation of perturbed linear stochastic functional differential equations.
\newblock {\em Applied Numerical Mathematics}, 2023.

\bibitem{AL:2023(AppliedMathLetters)}
J.~A.~D. Appleby and E.~Lawless.
\newblock Solution space characterisation of perturbed linear volterra integrodifferential convolution equations: The ${L}^p$ case.
\newblock {\em Applied Mathematics Letters}, 146:108825, 2023.

\bibitem{AL:2024(SVE_Lp)}
J.~A.~D. Appleby and E.~Lawless.
\newblock Solution space characterisation of perturbed linear discrete and continuous stochastic volterra convolution equations: the $\ell^p$ and ${L}^p$ cases, 2024. {arxiv:2407.07767}

\bibitem{AP:2016(EJQTDE)}
J.~A.~D. Appleby and D.~D. Patterson.
\newblock On the admissibility of unboundedness properties of forced deterministic and stochastic sublinear {V}olterra summation equations.
\newblock {\em Electron. J. Qual. Theory Differ. Equ.}, pages Paper No. 63, 44, 2016.

\bibitem{AP:2017}
J.~A.~D. Appleby and D.~D. Patterson.
\newblock Large fluctuations and growth rates of linear volterra summation equations.
\newblock {\em Differential Equations and Applications}, 23(6):1047–1080, 2017.

\bibitem{ApRie:2006(SAA)}
J.~A.~D. Appleby and M.~Riedle.
\newblock Almost sure asymptotic stability of stochastic {V}olterra integro-differential equations with fading perturbations.
\newblock {\em Stoch. Anal. Appl.}, 24(4):813--826, 2006.

\bibitem{AMR}
J.~A.~D. Appleby, X.~Mao, and M.~Riedle.
\newblock Geometric {B}rownian motion with delay: mean square characterisation.
\newblock {\em Proc. Amer. Math. Soc.}, 137(1):339--348, 2009.

\bibitem{Benes:1965(JMPh)}
V.~E. Beneš.
\newblock A nonlinear integral equation in the {M}arcinkiewicz space {$m_2$}.
\newblock {\em J. Math. and Phys.}, 44:24--35, 1965.

\bibitem{Bertrandias:1966}
J.~P. Bertrandias.
\newblock Espaces de fonctions born\'ees et continues en moyenne asymptotique d'ordre {$p$}.
\newblock {\em Bull. Soc. Math. France M\'em.}, 5:106, 1966.

\bibitem{BG:RegularVariation}
N.~H. Bingham, C.~M. Goldie, and J.~L. Teugels.
\newblock {\em Regular variation}, volume~27 of {\em Encyclopedia of Mathematics and its Applications}.
\newblock Cambridge University Press, Cambridge, 1989.

\bibitem{Boykov2024}
I.~V. Boykov, V.~A. Roudnev, and A.~I. Boykova.
\newblock Stability of solutions of systems of {V}olterra integral equations.
\newblock {\em Appl. Math. Comput.}, 475:Paper No. 128728, 2024.

\bibitem{BrunnerVIE}
H.~Brunner.
\newblock {\em Volterra integral equations}, volume~30 of {\em Cambridge Monographs on Applied and Computational Mathematics}.
\newblock Cambridge University Press, Cambridge, 2017.
\newblock An introduction to theory and applications.

\bibitem{ChanWilliams:1989}
T.~Chan and D.~Williams.
\newblock An ``excursion'' approach to an annealing problem.
\newblock {\em Math. Proc. Cambridge Philos. Soc.}, 105(1):169--176, 1989.

\bibitem{cor:1973non}
C.~Corduneanu.
\newblock {\em Integral Equations and Stability of Feedback Systems}.
\newblock Academic Press, New York, 1973.

\bibitem{Cor90b}
C.~Corduneanu.
\newblock {\em Integral equations and applications}.
\newblock Cambridge Univ. Press, New York, 1990.

\bibitem{Cor:2009}
C.~Corduneanu.
\newblock {\em Almost periodic oscillations and waves}.
\newblock Springer, New York, 2009.


\bibitem{Diekmann}
O.~Diekmann, S.~A. van Gils, S.~M. Verduyn~Lunel, and H.-O. Walther.
\newblock {\em Delay equations}, volume 110 of {\em Applied Mathematical Sciences}.
\newblock Springer-Verlag, New York, 1995.
\newblock Functional, complex, and nonlinear analysis.

\bibitem{GikSkor:1972}
I.I.~\=I. Gikhhman and A.~V. Skorohod.
\newblock {\em Stochastic differential equations}.
\newblock Springer-Verlag, New York-Heidelberg, 1972.


\bibitem{GLS}
G.~Gripenberg, S.-O. Londen, and O.~Staffans.
\newblock {\em Volterra Integral and Functional Equations}.
\newblock Encyclopedia of Mathematics and it's Applications. Cambridge University Press, 1990.



\bibitem{Pruss}
J.~Pr\"uss.
\newblock {\em Evolutionary integral equations and applications}, volume~87 of {\em Monographs in Mathematics}.
\newblock Birkh\"auser Verlag, Basel, 1993.


\bibitem{SY67a}
A.~Strauss and J.~A. Yorke.
\newblock Perturbation theorems for ordinary differential equations.
\newblock {\em Journal of Differential Equations}, 3:15--30, 1967.

\bibitem{SY67b}
A.~Strauss and J.~A. Yorke.
\newblock On asymptotically autonomous differential equations.
\newblock {\em Mathematical Systems Theory}, 1:175--182, 1967.

\end{thebibliography}
\end{document}